\newtheorem{theorem}{Theorem} 
\newtheorem{remark}{Remark}
\newtheorem{lemma}{Lemma}
\newtheorem{proposition}{Proposition}
\newcommand{\R}{\mathbb{R}}
\newcommand{\Z}{\mathbb{Z}}
\newcommand{\df}{\displaystyle\frac}
\newcommand{\Jac}{\mathbf{Jac}}
\newcommand{\diag}{\mathbf{diag}}
\DeclareMathOperator{\Ima}{Im}
\begin{document}

\title{Sharp seasonal threshold property for cooperative population dynamics with concave nonlinearities}

\author{Hongjun Ji \and  Martin Strugarek}

%\address{Universit\'{e} P.-M. Curie, Paris 6, Laboratoire Jacques--Louis Lions,
%UMR 7598 CNRS, 4 Pl. Jussieu, 16-26, 75252 Paris, France}
%\email{hongjun.ji@upmc.fr, martin.strugarek@upmc.fr}

\date{}

\maketitle

%\tableofcontents

\begin{abstract}
We consider a biological population whose environment varies periodically in time, exhibiting two very different ``seasons'': one is favorable and the other one is unfavorable. For monotone differential models with concave nonlinearities, we address the following question: the system's period being fixed, under what conditions does there exist a critical duration for the unfavorable season? By ``critical duration'' we mean that above some threshold, the population cannot sustain and extincts, while below this threshold, the system converges to a unique periodic and positive solution. We term this a ``sharp seasonal threshold property" (SSTP, for short).

Building upon a previous result, we obtain sufficient conditions for SSTP in any dimension and apply our criterion to a two-dimensional model featuring juvenile and adult populations of insects.

\end{abstract}

\textbf{Keywords:} dynamical systems; periodic forcing; seasonality; population dynamics;

\textbf{2010 Mathematics Subject Classification:} 15B48; 34D23; 34C25; 37C65; 92D25;

\section{Introduction }
We study differential dynamical systems arising from nonlinear periodic positive differential equations of the form
\begin{equation} \label{F0}
\frac{dx}{dt}=F(t,x),
\end{equation}
where $F$ is monotone and concave. These systems exhibit well-known contraction properties when $F$ is continuous (see \cite{HW}, \cite{HLS1}, \cite{JJF}). We extend in Theorem \ref{thm:extension} these properties to non-linearities that are only piecewise-continuous in time. This extension is motivated by the study of typical seasonal systems in population dynamics.

We denote by $\theta \in [0, 1]$ the proportion of the year spent in unfavorable season. Then, we convene that time $t$ belongs to an unfavorable (resp. a favorable) season if $nT   \leq t < (n + \theta)T$ (resp. if $ (n+ \theta)T \leq  t <  (n+1)T$) for some $n \in \Z_+$. In other words, we study the solutions to:
\begin{equation}\label{dyn:2seasons}
\frac{dX}{dt} = G(\pi_{\theta}(t), X), \quad \pi_{\theta}(t) = \begin{cases}
                                              \pi^U \text{ if } \frac{t}{T} - \lfloor \frac{t}{T} \rfloor \in [0, \theta),
                                              \\
                                              \pi^F \text{ if } \frac{t}{T} - \lfloor \frac{t}{T} \rfloor \in [\theta, 1),
                                             \end{cases}
\end{equation}
for some $G : \mathcal{P} \times \R^N \to \R^N$, with $\pi^U, \pi^F \in \mathcal{P}$ where $\mathcal{P}$ is the parameter space. We are looking for conditions ensuring that a sharp seasonal threshold property holds, that is:
\begin{equation*}
  \exists \theta_* \in [0, 1] \text{ such that } \begin{cases}
                                                  \text{if } \theta < \theta_*, \exists ! q : \R_+ \to \R^N, T\text{-periodic}, q \gg 0 \text{ and } \\ \forall X_0 \in \R_+^N \backslash \{0 \}, X \text{ converges to } q,
                                                  \\
                                                  \text{if } \theta > \theta_*, \forall X_0 \in \R_+^N, \, X \text{ converges to } 0.
                                                 \end{cases}
 \tag{SSTP}
 \label{def:threshold}
\end{equation*}
Ecologically, the respective duration of dry and wet seasons is crucial for population sustainability in various species. The property \eqref{def:threshold} means that if the dry season is longer than $\theta_* T$ then the population collapses and if it is shorter then the population densities will tend to be periodic.

Assume  that $F(t, 0) \equiv 0$. Thanks to the contraction properties of concave nonlinearities, the whole problem reduces to the study of the Floquet eigenvalue with maximum modulus of the linearization of \eqref{F0} at $X = 0$:
 \begin{equation} \label{dyn:linearized}
\frac{dz}{dt}=D_xF(t,0)z.
\end{equation}
In fact, this eigenvalue is equal to the spectral radius of the Poincar\'{e} application for~\eqref{dyn:linearized}, which we compute here for piecewise-autonomous systems.

Our proof uses the Perron-Frobenius theorem and relies on the Perron eigenvalue and (left and right) eigenvectors. The importance of this eigenvalue for quantifying the effects of seasonality has been acknowledged continuously in mathematical biology in at least three application fields: circadian rhythms (in particular in connection with cell division and tumor growth), harvesting and epidemiology.

It was noted in \cite{CGP} that Floquet eigenvalue with maximum modulus of \eqref{dyn:linearized} is always larger that the Perron eigenvalue of some averaged (over a period) matrix $\overline{F}$ defined from the entries of $D_x F(t, 0)$. There has been a continued interest in this eigenvalue for linear models of cell division since and we refer to \cite{GL} in particular for a detailed study of the monotonicity of the Perron eigenvalue with respect to parameters of a structured model for cell division. In a stochastic framework for growth and fragmentation, \cite{CCF} establishes a similar monotonicity property. In this context, the Perron eigenvalue is seen as the cell growth rate, and this is why its dependence in the model parameters is important. Here, we connect the eigenvalue monotonicity with a non-extinction condition to derive the~\eqref{def:threshold}. We emphasize that our Theorem \ref{thm:anydimension} gives some sufficient conditions for the monotonicity of the Perron eigenvalue, in the case when there are only two different seasons.

In dimension $1$, for the logistic equation with harvesting, Xiao has shown in \cite{XD} a sharp threshold property, where the two different ``seasons" correspond to one harvesting period (''unfavorable season``) and one rest period (''favorable season``). Contrary to the case of cell division, the model treated there is non-linear, though $1$-dimensional. Our results extend a part of those of \cite{XD} to $n$-dimensional concave monotone systems. Note that the cited article also studies the maximal sustainable yield, which can be seen as an objective function of the periodic solution $q$. On this topic, \cite[Section 5]{MPS} studies a structured problem of adaptive dynamics with concave nonlinearity and periodic forcing to show a similar effect as in \cite{XD} (there, for population size): in both cases, time fluctuations can improve an objective value.

For applications in epidemiology, where seasonality often has dramatic effects, we refer to \cite{BAD} and \cite{B} for the computation of case reproduction numbers with seasonal forcing.

The organization of the paper is as follows. The motivating model is detailed in Section~\ref{sec:motivation}, where we also define some notations. In Section \ref{sec:mainres} we state our results: first (Theorem \ref{thm:extension}) an extension to piecewise-continuous nonlinearities of the well-known results on monotone concave nonlinearities, then (Theorem \ref{thm:anydimension}) fairly general sufficient conditions for systems in any space dimension $N \in \Z_{>0}$ to satisfy ~\eqref{def:threshold}, and finally (Theorem \ref{thm:main}) an application to the two-dimensional system \eqref{dyn:2seasons}, for which we are able to show the threshold property \eqref{def:threshold} for a wide range of parameters.
The proofs are detailed in Section~\ref{sec:proofs} (and in Appendix \ref{app:proof} for Theorem~\ref{thm:extension}), while extensions and possible research directions are gathered in Section \ref{sec:discussion}.

\section{Context and motivation}
\label{sec:motivation}

Our reference model is a simplistic description of the population dynamics of some insects, with a juvenile stage exposed to quadratic competition and an adult stage. Let $J(t), A(t)$ represent the populations of juveniles and adults at time $t$, respectively. A very simple dynamic is defined by
\begin{equation}\label{S0}
\left\{
\begin{array}{l}
\df{d J}{dt}=b A - J (h + d_J + c_J J),
\\[10pt]
\df{d A}{dt}= h J- d_A A,
\end{array}
\right.
\end{equation}
where $d_Y$ ($Y \in \{J, A \}$) stands for the (linear) death rate, $b$ is the birth rate, $h$ is the hatching rate and the parameter $c_J$ tunes the only non-linearity: quadratic competition (=density-dependent death rate) among juveniles. This term effectively limits the total population size, as we will prove below. We use it to represent resource limitation both for breeding sites availability and for nutrient availability during growth. In principle, the parameters may depend on time:
\begin{equation}
  \forall t \in \R, \quad \pi(t) := (b, h, d_J, c_J, d_A) \in \R_+^5.
  \label{def:pi}
\end{equation}
For convenience, we rewrite the right-hand side of \eqref{S0} as $G(\pi, X)$ with $X = (J, A) \in \R^2$, and $G : \R_+^5 \times \R^2 \to \R^2$.

In the tempered areas where mosquito populations are established, dramatic seasonal variations in population abundance are usually observed. Namely, there is explosive growth in summer after rain events, whereas mosquitoes are very scarce in winter. This phenomenon is possible thanks to dormant (or "quiescent" or "refuge") phases in the mosquito's life-cycle.
These seasonal variations imply that the natural environment (temperature, rainfall, humidity etc.)  is very important for the mosquito.

We propose to study population dynamics in simple models such as \eqref{S0} under periodic seasonal forcing. As a rough approximation, we set up \eqref{S0} with periodic piecewise-constant coefficients of period $T = 1 \textrm{ year}$, each one possibly taking two different values over one period. Thus, the year is divided into unfavorable and favorable seasons, defined by parameter values $\pi^U, \pi^F \in \R_+^5$ such that 
\begin{equation}
\begin{pmatrix}
-d_J^F + d_J^U & b^F - d_A^F - (b^U -d_A^U) \\[10pt]
h^F - h^U & -d_A^F + d_A^U
\end{pmatrix} > 0.
\label{hyp:parameters}
\end{equation}
The four scalar inequalities of condition \eqref{hyp:parameters} deserve a biological justification. It implies that during the favorable season, the hatching rate is larger than during the unfavorable season, while death rates (for juveniles, and adults) are smaller. These assumptions rely on the facts that breeding sites availability and quality is much higher in good season (whence higher hatching rate and birth rate and lower juvenile competition), while the temperature increase can be expected to extend the life-span of both adults and juveniles. The first component in \eqref{hyp:parameters} implies that the growth coefficients $b - d_A$ are ordered: $b^F - d_A^F > b^U - d_A^U$. This is true in particular if $b^F > b^U$, but holds in more generality.

We emphasize that the systems under study are excessively simple because, in mathematical terms, they are cooperative with concave nonlinearity, and as such they have strong asymptotic convergence properties.

\paragraph{Notations.} Let $X$ be a normed vector space and $\mathcal{K} \subset X$ be a cone.
For  $A , B \in X$, we define
\begin{align*}
& A \geq_{\mathcal{K}} B \Longleftrightarrow B-A \in \mathcal{K},
\\[5pt]
&A >_{\mathcal{K}} B \Longleftrightarrow A \geq_{\mathcal{K}} B \text{ and } A \not= B,
\\[5pt]
 &A\gg_{\mathcal{K}} B   \Longleftrightarrow B - A \in \mathring{\mathcal{K}}.
\end{align*}
In the special case when $X = \R^{m \times n}$ and $\mathcal{K} = \R_+^{m \times n}$ for some $m, n \in \Z_{>0}$, we omit the subscript $\mathcal{K}$. For $A, B \in X$ such that $A \leq_{\mathcal{K}} B$, the interval $[A,B]$ is a non-empty set defined by
\[
    [A, B] = \{ C \in X, \, A \leq_{\mathcal{K}} C \leq_{\mathcal{K}} B \}.
\]

For $A \in \mathcal{M}_n(\R)$, the spectral radius of  $A$ denoted by  $\rho(A)$ is $\rho(A):= \max \{ \lvert \lambda  \rvert, \lambda \in \sigma(A)\}$ where  $\sigma(A)$  is the spectrum of $A$ and the spectral abscissa of $A$, denoted by $\mu(A)$, is 
$\mu(A) := \max \{ \Re(\lambda), \,\lambda \in \sigma(A) \}$. 

A matrix is called irreducible if it is not similar to an upper triangular matrix by permutation. We call Metzler the matrices in $\mathcal{M}_n(\R)$ whose off-diagonal elements are all nonnegative.

For $X, Y$ two real finite-dimensional vector spaces embedded in $\R^d$ ($d \geq 1$), we denote by $\mathcal{L} (X, Y)$ the space of linear applications from $X$ to $Y$, with the convention $\mathcal{L} (X) = \mathcal{L} (X, X)$.
We denote the adjoint of $A \in \mathcal{L}(X,Y)$ by $A^* \in \mathcal{L}(Y,X)$, defined by
\[
    \forall (v, w) \in X \times Y, \quad \langle Av, w \rangle = \langle v, A^* w \rangle,
\]
where $\langle \cdot, \cdot \rangle$ denoted the euclidean scalar product in $\R^d$.
For $x \in \R$, the notation $\lfloor x \rfloor$ stands for the largest integer $n \in \Z$ such that $n \leq x$.

Let $F : \R_t \times \R^N_x \to \R^N$ be piecewise continuous in $t$ and continuously differentiable in~$x$. The system
\eqref{F0} is {\itshape cooperative } if its Jacobian matrix is  Metzler:
\[
\forall(t,x)  \in \mathbb{R}_+ \times \mathbb{R}_+^N,  \,  i \ne j  \implies  \frac{ \partial F_i}{\partial x_j} (t, x) \geq 0,
\tag{M}
\label{monotone}
\]
It is {\itshape positive}  ({\itshape i.e.}, $\R_+^N$ is an invariant set) if
\[
\forall t \in \R_+, \, \forall 1 \leq i \leq N, \, \forall x \geq 0, \quad x_i = 0 \implies F_i(t, x) \geq 0.
\tag{P}
\label{positive}
\]
Under condition \eqref{monotone}, \eqref{F0} is positive if $\forall t \in \R_+, \, F(t,0)\geq 0$.
We say that \eqref{F0} defines a {\itshape concave dynamics} on $\R_+^N$ if
\[
\forall 0 \ll x \ll y, \,    D_xF(t,x)  \geq  D_xF(t,y),
\tag{C}
\label{concave}
\]
and that \eqref{dyn:linearized} is {\itshape irreducible} if  
\begin{equation*}
\forall t \in \mathbb{R}_+, \, D_xF(t,0) \text{ is  irreducible in } \mathcal{M}_N (\R).
\tag{I}
\label{irre}
\end{equation*}

\section{Results}
\label{sec:mainres}

\subsection{General results}
In order to study the asymptotic behavior of \eqref{dyn:2seasons}, we generalize a result by Smith~\cite{HLS1} (refined by Jiang in \cite{JJF}) about continuous concave and cooperative nonlinearities to piecewise-continuous (in time) nonlinearities.
\begin{theorem}  
Let $F : \R_t \times \R^N_x \to \R^N$ be $T$-periodic and piecewise-continuous in $t$ and such that for all $t \in \R_+$, $F(t, \cdot) \in \mathcal{C}^1 (\R^N, \R^N)$. Assume that $F$ satisfies assumptions \eqref{positive}, \eqref{monotone}, \eqref{concave} and \eqref{irre}, so that the associated differential system \eqref{F0} is positive, monotone and concave with irreducible linearization at $0$. Let $\lambda \in \R$ denote the Floquet multiplier with maximal modulus of \eqref{dyn:linearized}.

%We consider the $T$-periodic differential system defined by $F$:
%\begin{equation}\label{dyn:Tperpiecewise}
%\frac{dx}{dt}=F(t,x),
%\end{equation}
%and assume that \eqref{dyn:Tperpiecewise} defines a cooperative and concave dynamics on $\R_+^N$, that is:
%\begin{align*}
%&\forall t \in \R_+, \, \forall 1 \leq i \leq N, \, \forall x \geq 0, \, x_i = 0 \implies F_i(t, x) \geq 0,
%\tag{P}
%\label{positive}
%\\[10pt]
%&\forall(t,x)  \in \mathbb{R}_+ \times \mathbb{R}_+^N,  \,  i \ne j  \implies  \frac{ \partial F_i}{\partial x_j} (t, x) \geq 0,
%\tag{M}
%\label{monotone}
%\\[10pt]
%&\forall 0 \ll x \ll y, \,    D_xF(t,x)  \geq  D_xF(t,y).
%\tag{C}
%\label{concave}
%\end{align*}
%Then, every solution of \eqref{dyn:Tperpiecewise} with $x(t_0) \geq 0$ extends to $[t_0, \infty)$ with $x(t) \geq 0$ for all $t\geq t_0$.
%
%Assume in addition that $F(t, 0) \equiv 0$. The variational equation about $0$ is the following $T$-periodic linear equation:
% \begin{equation} \label{dyn:linearized}
%\frac{dz}{dt}=D_xF(t,0)z.
%\end{equation}
%Assume that \eqref{dyn:linearized} is irreducible:
%\begin{equation*}
%\forall t \in \mathbb{R}_+, \, D_xF(t,0) \text{ is  irreducible},
%\tag{I}
%\label{irre}
%\end{equation*}
%and let $\lambda \in \R$ denote the Floquet multiplier with maximal modulus of \eqref{dyn:linearized}.

If $\lambda \leq 1$ then every non-negative solution of \eqref{F0} converges to $0$. Otherwise,
\begin{enumerate}
 \item[(i)] either every non-negative solution of \eqref{F0} satisfies  $\lim\limits_{t \to \infty }x(t)=  \infty $,
 \item[(ii)] or \eqref{F0} possesses a unique (nonzero) $T$-periodic solution $q(t)$.
\end{enumerate}
  In case $(ii)$, $q \gg 0$ and $\lim\limits_{t \to \infty }(x(t)-q(t))= 0 $ for every non-negative solution of~\eqref{F0}.
\label{thm:extension}
\end{theorem}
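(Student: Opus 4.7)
The overall plan is to reduce the problem to the discrete-time theory of strongly monotone, concave maps by studying the Poincaré (period-$T$) map, and to check that the piecewise-continuous time-dependence does not break any of the standard arguments that yield the classical Smith--Jiang dichotomy in the continuous case.

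First I would construct the flow $\Phi^{s,t} : \R_+^N \to \R_+^N$ of \eqref{F0} by concatenating solutions across the (locally finitely many) discontinuity times of $F(\cdot, x)$. On each continuity interval, classical ODE theory gives existence, uniqueness, and $\mathcal{C}^1$-dependence on initial data; \eqref{positive} keeps trajectories in $\R_+^N$; and non-explosion on $[0,T]$ follows from the monotone comparison with the affine bound $F(t,x) \leq_{\R_+^N} F(t,0) + D_xF(t,0)\,x$ obtained from \eqref{concave}. Set $P := \Phi^{0,T}$; by $T$-periodicity, solutions of \eqref{F0} at times $nT$ are just iterates of $P$, and the theorem will follow from the corresponding discrete-time statement for $P$.

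Next I would establish the three structural properties of $P$ on which everything hinges: (a) $P(0) = 0$ and $P$ is $\mathcal{C}^1$ on $\R_+^N$; (b) $P$ is monotone and concave with respect to $\R_+^N$; (c) $DP(0)$ is a strictly positive irreducible matrix. Point (a) is piecewise-standard. Point (b) is obtained by proving the corresponding property for the time-$\tau$ map on each continuity interval (here one uses that, by \eqref{monotone} and \eqref{concave}, the flow of a cooperative, concave $\mathcal{C}^1$ vector field is monotone and concave, as in Smith \cite{HLS1} and Jiang \cite{JJF}) and then observing that composition of monotone concave maps is monotone concave. For (c), $DP(0)$ equals the fundamental matrix $R(T)$ of the linearization \eqref{dyn:linearized} at time $T$; on each continuity interval the fundamental matrix is the flow of a Metzler irreducible linear system, hence becomes strictly positive in any positive time, and the finite product of such strictly positive matrices is again strictly positive and irreducible. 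Perron--Frobenius then yields a simple real dominant eigenvalue equal to the spectral radius $\rho(R(T))$, with positive right and left eigenvectors; this is exactly the Floquet multiplier $\lambda$ of maximal modulus of \eqref{dyn:linearized}.

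With these ingredients in place, I would apply the discrete-time theorem for strongly monotone, concave maps on $\R_+^N$ fixing $0$ (Krause--Ranft, or Smith \cite{HLS1} Chap.~5): if the dominant eigenvalue $\lambda$ of $DP(0)$ satisfies $\lambda \leq 1$, then every orbit $P^n(x_0)$ converges to $0$; if $\lambda > 1$, then the alternative stated in (i)--(ii) holds for $P$, with the fixed point $q^* \gg 0$ in case (ii) being unique and globally attracting in $\R_+^N \setminus \{0\}$. Transferring back to continuous time: in case (ii), the orbit $q(t) := \Phi^{0,t}(q^*)$ is a $T$-periodic solution with $q \gg 0$, and for any non-negative initial datum $x_0$ the convergence $P^n(x_0) \to q^*$ together with the uniform continuity of $\Phi^{0,\cdot}$ on compact sets of initial data over $[0,T]$ upgrades to $x(t) - q(t) \to 0$; convergence to $0$ in the $\lambda \leq 1$ case is analogous.

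The main obstacle is point (b): propagating concavity and monotonicity of the time-$\tau$ map through the non-smooth time-dependence. The continuous-time Jiang/Smith argument relies on differentiating the variation equation in $t$, which is not directly available at jump times of $F$. The fix is to argue piecewise on each continuity interval---where the classical proof applies verbatim---and then close the argument by composition, noting that monotone concavity is stable under composition and that the Perron--Frobenius structure of $DP(0)$ only depends on the product of fundamental matrices, not on their smoothness in $t$. This careful piecewise bookkeeping is presumably what is carried out in Appendix \ref{app:proof}.
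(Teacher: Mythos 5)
Your proposal is correct and follows essentially the same route as the paper's Appendix~\ref{app:proof}: reduce to the Poincar\'e map, verify positivity, strong monotonicity, strict positivity/irreducibility of $DP(0)$, and the concavity condition on $DP$, then invoke Smith's discrete-time theorem for monotone concave maps. The only cosmetic difference is that the paper handles the non-smooth time-dependence by applying Kamke's theorem and the variation-of-constants formula directly to the piecewise-continuous variational equation on $[0,T]$, rather than composing the per-interval time-$\tau$ maps as you suggest; both forms of bookkeeping lead to the same verification.
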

The proof of Theorem \ref{thm:extension} (in Appendix \ref{app:proof}) follows closely the lines of \cite{HLS1} and \cite{JJF}.

An illuminating example when Theorem \ref{thm:extension} applies is for $T$-periodic piecewise autonomous differential systems, where for all $x \in \R^N$, $F(\cdot, x)$ is a piecewise-constant function. Namely, we assume that there exists $K \in \Z_{>0}$ and functions $(F^k)_{1 \leq k \leq K} : \R_+^N \to \R_+^N$ such that:
 \begin{equation}
F(t,x) = F^{k} (x) \text{ if } \frac{t}{T} - \big\lfloor \frac{t}{T} \big\rfloor \in [\theta_{k-1}, \theta_k),
\label{def:Fpiecewiseconstant}
 \end{equation}
where $(\theta_k)_{0 \leq k \leq K} \in [0, 1]^{K+1}$ is a non-decreasing family such that $\theta_0 = 0$ and $\theta_K = 1$. 
To verify the hypotheses of Theorem \ref{thm:extension}, we need to assume that for all $1 \leq k \leq K$,  $F^k$ is continuously differentiable, monotone, concave and satisfies $F^k (0) = 0$; and in addition that $D F^k (0)$ is irreducible for all $1 \leq k \leq K$.

The main advantage of piecewise-constant non-linearities is that for such dynamics (and almost only for these dynamics), the Floquet multiplier with maximal modulus $\lambda$ can be computed explicitly as the following spectral radius:
\begin{equation}
 \lambda = \rho \big( e^{(\theta_K-\theta_{K-1}) T \cdot D F^K(0)} \cdots e^{(\theta_1 - \theta_0) T \cdot D F^1 (0)} \big).
 \label{formula:Floquet}
\end{equation}

In the case $K = 2$, with $\theta := \theta_1$, the Perron-Frobenius theorem applies to
\[
 M(\theta) := e^{(1-\theta) T \cdot D F^2(0)} e^{\theta T \cdot D F^1 (0)},
\]
which is positive since $D F^k (0)$ are (irreducible) Metzler matrix by \eqref{monotone} (and \eqref{irre}).
Therefore there exists unique vectors $V (\theta), V_* (\theta) \gg 0$ with $\lVert V(\theta) \rVert = 1$ and $\langle V(\theta), V_* (\theta) \rangle = 1$, and a unique positive number $\rho (\theta)$ such that
\begin{equation}
 M(\theta) V(\theta) = \rho (\theta) V(\theta), \quad M(\theta)^* V_* (\theta) = \rho(\theta) V_* (\theta).
 \label{eq:M}
\end{equation}
In this setting, assume without loss of generality that $\mu(DF^2(0)) \geq \mu(DF^1(0))$, and denote $S := DF^1 (0) - DF^2 (0)$.
We consider two specific cases:
 \begin{enumerate}
  \item[(A)] $DF^1(0)$ and $DF^2(0)$ have the same principal right or left eigenvector;
  \item[(B)] for all $\theta \in [0, 1]$, one of the following holds:
  \begin{enumerate}
    \item[(B-1)] $\exists P \in GL_N (\R)$, $P S < 0$ and $(P^{-1})^* V_* (\theta) > 0$;
    \item[(B-2)] $\exists P \in GL_N (\R)$, $S P < 0$ and $P^{-1} V (\theta) > 0$;
    \item[(B-3)] $\exists P, Q \in \mathcal{M}_N(\R)$, $S < P^* Q$ and $P V_* (\theta) = - Q V (\theta)$.
  \end{enumerate}
 \end{enumerate}

\begin{theorem}
 Let $F$ of the form~\eqref{def:Fpiecewiseconstant} with $K = 2$ satisfy the assumptions of Theorem \ref{thm:extension}. Assume that the forward orbits of \eqref{F0} are bounded. Then under $(A)$ or $(B)$, \eqref{def:threshold} holds.  
 
 \label{thm:anydimension}
 
\end{theorem}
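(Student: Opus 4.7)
The plan is to reduce (SSTP) to the monotonicity in $\theta$ of the Perron eigenvalue $\rho(\theta)$ of the matrix $M(\theta)$ from \eqref{eq:M}. By Theorem \ref{thm:extension} applied to the piecewise-autonomous system \eqref{def:Fpiecewiseconstant} with $K=2$, the Floquet multiplier with maximal modulus of the linearization equals $\rho(\theta)$ via \eqref{formula:Floquet}. The assumed boundedness of forward orbits rules out case (i) of Theorem \ref{thm:extension}; so for each fixed $\theta$, $\rho(\theta) > 1$ yields a unique positive $T$-periodic attractor, whereas $\rho(\theta) \leq 1$ forces extinction. Since $\rho$ is continuous with $\rho(0) = e^{T\mu(DF^2(0))} \geq e^{T\mu(DF^1(0))} = \rho(1)$, (SSTP) will follow once we show that $\theta \mapsto \rho(\theta)$ is non-increasing on $[0,1]$, with strict decrease at any $\theta$ where $\rho(\theta) = 1$.

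Classical perturbation theory for the simple eigenvalue $\rho(\theta)$ (simplicity granted by Perron-Frobenius under \eqref{irre}), with the normalization $\langle V(\theta), V_*(\theta)\rangle = 1$, yields the formula
\[
\rho'(\theta) \;=\; T\,\bigl\langle V_*(\theta),\; e^{(1-\theta)T DF^2(0)}\, S\, e^{\theta T DF^1(0)}\, V(\theta)\bigr\rangle, \qquad S := DF^1(0) - DF^2(0),
\]
obtained by differentiating $M(\theta) = e^{(1-\theta)T DF^2(0)} e^{\theta T DF^1(0)}$ and using that $DF^2(0)$ commutes with its own exponential. The task is now to show that this quantity is $\leq 0$, strictly so whenever $\rho(\theta) = 1$.

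Under hypothesis (A), a common principal right eigenvector $u \gg 0$ with $DF^i(0) u = \mu_i u$ gives $M(\theta) u = e^{T\mu_2 - \theta T (\mu_2 - \mu_1)} u$, so by uniqueness of the Perron eigenvector $\rho(\theta) = e^{T\mu_2 - \theta T(\mu_2 - \mu_1)}$: non-increasing in $\theta$, and strictly so when $\mu_1 < \mu_2$; the common-left-eigenvector case is symmetric after taking adjoints. Under hypothesis (B), for each $\theta$ the corresponding sub-case yields $\rho'(\theta) \leq 0$ by a cone-positivity argument: one factorizes $S$ as $P^{-1}(PS)$ (sub-case (B-1)), $(SP)P^{-1}$ (sub-case (B-2)), or bounds $S \leq P^*Q$ (sub-case (B-3)), and couples the positivity of $(P^{-1})^* V_*(\theta)$, $P^{-1} V(\theta)$, or the orthogonality $PV_*(\theta) = -QV(\theta)$, with the non-negativity (in fact strict positivity in the interior, by \eqref{irre}) of the exponentials $e^{(1-\theta)T DF^2(0)}$ and $e^{\theta T DF^1(0)}$.

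The main obstacle is the sign analysis in case (B): the exponential factors sit between the test vectors $V_*, V$ and the matrix $S$, so the positivity conditions in the hypotheses do not transfer directly to the desired pairing. The remedy is to repeatedly invoke the Perron relations $M(\theta) V(\theta) = \rho(\theta) V(\theta)$ and $M(\theta)^* V_*(\theta) = \rho(\theta) V_*(\theta)$ in order to rewrite, for instance, $e^{(1-\theta)T DF^2(0)^*} V_*(\theta)$ as $\rho(\theta)\, e^{-\theta T DF^1(0)^*} V_*(\theta)$ (and analogously for $V$), thereby moving all exponentials onto one side before applying the cone-theoretic hypothesis of the relevant sub-case. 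Once $\rho'(\theta) \leq 0$ has been established in every sub-case, strict decrease at any crossing of $1$ is forced by \eqref{irre} together with $S \neq 0$, which prevents $\rho$ from plateauing, thereby pinning down the unique threshold $\theta_* \in [0,1]$ required by (SSTP).
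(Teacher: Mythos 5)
Your overall architecture coincides with the paper's: reduce \eqref{def:threshold} to the monotonicity of $\theta \mapsto \rho(M(\theta))$, compute $\rho'$ by perturbation of the simple Perron eigenvalue, and sign it using (A)/(B); your explicit computation of $\rho(\theta)$ under (A) via the common eigenvector is fine and even a little cleaner than the paper's. However, there is a genuine gap exactly at the step you flag as ``the main obstacle''. The paper's Lemma \ref{lem:rhoprime} establishes the identity $\rho'(\theta) = T\rho(\theta)\langle S V(\theta), V_*(\theta)\rangle$, and it is to \emph{this} pairing that hypotheses (B-1)--(B-3) apply directly. Your starting formula $\rho'(\theta) = T\langle V_*, e^{(1-\theta)TM_2} S e^{\theta TM_1} V\rangle$ (with $M_i = DF^i(0)$) is correct, but the manipulation you propose does not reach the needed identity: substituting $e^{(1-\theta)TM_2^*}V_* = \rho\, e^{-\theta TM_1^*}V_*$ (which does follow from $M^*V_* = \rho V_*$) yields
\[
\rho'(\theta) = T\rho(\theta)\,\bigl\langle V_*(\theta),\; e^{-\theta TM_1}\, S\, e^{\theta TM_1}\, V(\theta)\bigr\rangle,
\]
i.e.\ a \emph{conjugate} of $S$ sandwiched between exponentials, not $S$ itself; the analogous substitution for $V$ produces $e^{(1-\theta)TM_2}Se^{-(1-\theta)TM_2}$. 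Since (B) is a hypothesis on $S$ paired with $V(\theta)$ and $V_*(\theta)$, none of the sub-cases applies to these conjugated expressions, and the sign analysis in case (B) does not go through as written.

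The missing step is to split $S = M_1 - M_2$ \emph{before} moving any exponential and to commute each generator with its own exponential, giving $e^{(1-\theta)TM_2}(M_1 - M_2)e^{\theta TM_1} = M(\theta)M_1 - M_2 M(\theta)$; pairing with $V$ and $V_*$ and using $MV=\rho V$, $M^*V_* = \rho V_*$ together with $\langle V,V_*\rangle\equiv 1$ then collapses everything to $T\rho(\theta)\langle S V(\theta), V_*(\theta)\rangle$, after which your cone arguments for (B-1)--(B-3) work exactly as you sketch them (and as in the paper). A secondary remark: the ``strict decrease at any crossing of $1$'' you invoke at the end is not needed for \eqref{def:threshold} --- a non-increasing $\rho$ already yields the threshold $\theta_* := \sup\{\theta : \rho(\theta) > 1\}$ --- which is fortunate, because irreducibility together with $S \neq 0$ does not by itself rule out a plateau (e.g.\ under (A) with $\mu_1=\mu_2$, $\rho$ is constant).
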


\begin{remark}
In addition, condition $(B-1)$ (resp. $(B-2)$) is equivalent to 
 \[
  S^* V_* (\theta) < 0 \text{ (resp. } S V (\theta) < 0 \text{)},
 \]
and if condition $(A)$ holds then $V(\theta) \equiv V$ or $V_* (\theta) \equiv V_*$, where $V$ (resp. $V_*$) is the right (resp. left) principal eigenvector of $DF^i(0)$, $i \in \{1, 2\}$.
 
\end{remark}

\begin{proof}
 We apply Theorem \ref{thm:extension} and check that the value of $\lambda$ (determining if case $(i)$ or $(ii)$ occurs) is a decreasing function of $\theta$ under assumptions $(A)$ or $(B)$. The forward-boundedness of orbits rules out the case $x \to +\infty$, thus leading to the result. More details in Section~\ref{subs:proofanydim}.
\end{proof}
\begin{remark}
 In the case $DF^2(0) > DF^1(0)$, we note that conditions $(B-1)$ and $(B-2)$ are obviously satisfied with $P = I$ (identity matrix), and condition $(B-3)$ is obviously satisfied with $P = Q = 0$.
\end{remark}
\begin{remark}
As will be seen below, in practical situations it is sometimes easier to check condition $(B-1)$ rather than computing $S^* V_*(\theta)$.
\end{remark}

\subsection{Application to a two-dimensional model of insect population dynamics}
We can now specify Theorem \ref{thm:anydimension} to the two-dimensional ($N=2$) case of \eqref{S0}.
First we describe the general properties of this system
\begin{proposition}
 For system \eqref{S0} written as $\dot{X} = G(\pi(t), X) =: F(t, X)$, where $\pi$ is defined by \eqref{def:pi}, assume that $\pi(t) \gg 0$, there exists $c, C \in \R_+^*$ such that $\pi_i (t) \geq c$ for $i \in \{4, 5\}$ and $\pi (t) \leq C \mathds{1}$.  Then, it is positive, forward-bounded, cooperative and concave.
 \label{prop:sysgen}
\end{proposition}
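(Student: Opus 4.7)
The plan is to verify the four properties (P), (M), (C), and forward-boundedness separately. The first three are direct computations from the explicit form of $G$, while boundedness is the substantive step and requires a Lyapunov-type argument exploiting the uniform lower bound on $c_J$ and $d_A$.

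First, positivity is read off the equations: when $J = 0$, $\dot{J} = bA \geq 0$, and when $A = 0$, $\dot{A} = hJ \geq 0$, so~\eqref{positive} holds. The Jacobian is
\[
D_X G(\pi, X) = \begin{pmatrix} -h - d_J - 2 c_J J & b \\ h & -d_A \end{pmatrix},
\]
with non-negative off-diagonal entries $b, h$, giving~\eqref{monotone}. For~\eqref{concave}, I observe that the only $X$-dependent entry of $D_X G$ is the $(1,1)$ entry, which is a decreasing function of $J$ (as $c_J \geq 0$); hence $X \mapsto D_X G(\pi(t), X)$ is componentwise non-increasing on $\R_+^2$, which is exactly~\eqref{concave}.

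For forward-boundedness, I would introduce the weighted functional $V(t) := J(t) + \beta A(t)$ with $\beta$ chosen large enough that $\beta c \geq C$, and compute
\[
\dot V = (b - \beta d_A) A + \bigl((\beta - 1)h - d_J\bigr) J - c_J J^2.
\]
The uniform bounds $\pi \leq C \mathds{1}$, $d_A \geq c$, and $c_J \geq c$ make the $A$-coefficient $\leq -\delta < 0$ and the $J$-coefficient bounded above by a constant $K$, so completing the square in $J$ against $-c_J J^2$ yields $\dot V + \alpha V \leq M$ for explicit constants $\alpha, M > 0$ depending only on $c$ and $C$. Grönwall's inequality then gives $V(t) \leq \max\bigl(V(0), M/\alpha\bigr)$, and since $V \geq \min(1,\beta)(J + A)$, the orbit is bounded.

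The main obstacle is this last step, which depends crucially on both uniform lower bounds: $d_A \geq c$ turns adult death into a genuine dissipation (without it, $A$ could grow unboundedly through the cooperative term $hJ$), and $c_J \geq c$ provides the quadratic damping that absorbs the linear growth in $J$. The upper bound $\pi \leq C \mathds{1}$ then enters only to make the coefficients in the Lyapunov estimate uniform in time; absent the lower bounds, no such estimate is possible.
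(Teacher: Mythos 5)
Your verification of positivity, cooperativity, and concavity matches the paper's computations exactly. For forward-boundedness, however, you take a genuinely different route: the paper constructs an explicit positively invariant rectangle $\Omega_L = [0,L]\times[0,\tau^* L]$ with $\tau^* = \sup_t h(t)/d_A(t)$ and $L \geq \sup_t (b(t)\tau^* - h(t) - d_J(t))/c_J(t)$, checking that the vector field points inward on the two outer edges, whereas you use a weighted linear Lyapunov function $V = J + \beta A$ and a Gr\"onwall estimate. Both arguments rest on the same hypotheses ($d_A \geq c$ to control $A$ by $J$, $c_J \geq c$ to absorb the linear growth in $J$, and $\pi \leq C\mathds{1}$ for uniformity in $t$), and both are correct. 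Your version buys a little more: the bound $V(t) \leq \max(V(0), M/\alpha)$ exhibits a uniform absorbing set independent of the initial data, while the paper's version yields an explicit family of nested invariant rectangles, which is the more geometric statement and is reused implicitly when the Dulac criterion is invoked. One small quantitative slip: choosing $\beta$ so that $\beta c \geq C$ only gives $b - \beta d_A \leq 0$, not $\leq -\delta < 0$; you need a strict margin, e.g. $\beta = 2C/c$, so that $b - \beta d_A \leq -C$ and a positive $\alpha$ with $b - \beta d_A + \alpha\beta \leq 0$ exists. This is cosmetic and does not affect the argument.
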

Then, we give the dynamics of the non-seasonal (=autonomous) system  \eqref{S0} with $\pi(t) \equiv \pi = (b, h, d_J, c_J, d_A)$. We define the basic offspring number:
    \begin{equation}
      \mathcal{R}_0 = \mathcal{R}(\pi) := \frac{b h}{d_A (h+d_J)}.
      \label{eq:R0}
    \end{equation}
\begin{proposition}
  If $\mathcal{R}_0 \leq 1$, then \eqref{S0} has no positive steady state and the trivial equilibrium is a global attractor.
    If $\mathcal{R}_0 > 1$ then \eqref{S0} has exactly one positive steady state $S_1^*=  (\mathcal{R}_0 - 1) \big(\frac{h+d_J}{c_J},\frac{h (h+d_J)}{c_J d_A} \big)$, which is a global attractor in $\R_+^2 \backslash \{ 0 \}$.
 \label{prop:autonomous}
\end{proposition}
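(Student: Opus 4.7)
The plan is to reduce Proposition \ref{prop:autonomous} to an application of Theorem \ref{thm:extension} with an arbitrary period $T > 0$ (which is permissible since the system is autonomous). The hypotheses \eqref{positive}, \eqref{monotone}, \eqref{concave} and forward-boundedness are furnished directly by Proposition \ref{prop:sysgen}. Irreducibility \eqref{irre} of the Jacobian
\[
    DF(0) = \begin{pmatrix} -(h+d_J) & b \\ h & -d_A \end{pmatrix}
\]
holds because the off-diagonal entries $b$ and $h$ are strictly positive, and $F(\cdot, 0) \equiv 0$ is immediate from the definition of $G$.

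I would first enumerate the equilibria by hand: setting $\dot{A} = 0$ gives $A = hJ/d_A$, and inserting this into $\dot{J} = 0$ yields $J \bigl( \tfrac{bh}{d_A} - (h + d_J) - c_J J \bigr) = 0$, so that a positive equilibrium exists if and only if $\mathcal{R}_0 > 1$, in which case it coincides with $S_1^*$. Next I would compute the Floquet multiplier with maximum modulus of \eqref{dyn:linearized}, which in the autonomous case is simply $\lambda = e^{T \mu(DF(0))}$. A direct computation gives $\det DF(0) = d_A(h+d_J)(1-\mathcal{R}_0)$ and $\mathrm{tr}\, DF(0) < 0$; since $DF(0)$ is an irreducible Metzler matrix, its spectral abscissa $\mu$ is the real Perron eigenvalue, and these two observations imply $\mathrm{sgn}(\mu) = \mathrm{sgn}(\mathcal{R}_0 - 1)$.

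The conclusion then follows from the dichotomy in Theorem \ref{thm:extension}. If $\mathcal{R}_0 \leq 1$ then $\lambda \leq 1$, which directly yields convergence of every non-negative solution to $0$; absence of a positive steady state is already apparent from the equilibrium computation above. If $\mathcal{R}_0 > 1$ then $\lambda > 1$, and forward-boundedness excludes the blow-up alternative, so Theorem \ref{thm:extension} produces a unique positive $T$-periodic solution $q$ attracting every non-negative orbit; applying the uniqueness of $q$ to its time-translates $q(\cdot + s)$ (or equivalently, invoking the statement for every $T > 0$) forces $q$ to be constant, hence $q \equiv S_1^*$. I expect the main subtlety to be the threshold case $\mathcal{R}_0 = 1$, where $DF(0)$ is singular and a purely local analysis at the origin would be insufficient; here the global convergence to $0$ is delivered directly by the $\lambda \leq 1$ clause of Theorem \ref{thm:extension}, which sidesteps any need for a center-manifold argument.
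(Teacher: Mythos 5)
Your proposal is correct, but it proves the proposition by a genuinely different route than the paper. The paper's proof is an elementary planar phase-plane analysis: it enumerates the equilibria from the nullclines (as you do), then establishes local stability in Proposition \ref{prop:stab} --- including a rather delicate treatment of the degenerate case $\mathcal{R}_0=1$, where $DF(0)$ is singular and the origin is a higher-order critical point analyzed via polar coordinates and characteristic directions --- then constructs explicit forward-invariant rectangles $\Omega_L$, and finally rules out limit cycles with the Dulac (divergence) criterion to conclude global convergence. You instead view the autonomous system as $T$-periodic for an arbitrary $T>0$ and invoke Theorem \ref{thm:extension}: the sign computation $\mathrm{sgn}\,\mu(DF(0))=\mathrm{sgn}(\mathcal{R}_0-1)$ via $\det DF(0)=d_A(h+d_J)(1-\mathcal{R}_0)$ and $\mathrm{tr}\,DF(0)<0$ is correct for an irreducible Metzler matrix, forward-boundedness (cited from Proposition \ref{prop:sysgen}) excludes alternative $(i)$, and the time-translation argument forcing the unique positive periodic orbit to be the constant $S_1^*$ is airtight. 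There is no circularity, since the appendix proof of Theorem \ref{thm:extension} does not use Proposition \ref{prop:autonomous}. What your route buys is brevity and, notably, a clean handling of the threshold case $\mathcal{R}_0=1$, which is where the paper expends the most effort; what it costs is reliance on the full machinery of Theorem \ref{thm:extension}, and it does not recover the finer local information (node/saddle type, directions of approach to the origin) that the paper records in Proposition \ref{prop:stab}. One cosmetic caveat: Theorem \ref{thm:extension} as stated asserts convergence to $q$ for \emph{every} non-negative solution, which cannot include $x\equiv 0$; you should read it (as the paper clearly intends, cf.\ the statement of the threshold property) as applying to nonzero initial data, which is exactly what the claimed global attractivity in $\R_+^2\setminus\{0\}$ requires.
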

The proofs of Proposition \ref{prop:autonomous} and Proposition \ref{prop:sysgen} are to be found in Section \ref{sec:autonomous}.

We finally state the sharp seasonal threshold property for \eqref{dyn:2seasons}:
\begin{theorem}
For \eqref{dyn:2seasons} under assumption \eqref{hyp:parameters}, if $\mathcal{R}_0 (\pi^U) < 1 < \mathcal{R}_0 (\pi^F)$ and $b^U + d_J^U > d_A^ U$ (where $\pi^U = (b^U, h^U, d_J^U, c_J^U, d_A^U)$) then \eqref{def:threshold} holds with $\theta_* \in (0, 1)$.
 \label{thm:main}
\end{theorem}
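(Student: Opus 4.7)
The plan is to invoke Theorem~\ref{thm:anydimension} under condition $(B-1)$, after verifying the standing hypotheses and computing the boundary values of the dominant Floquet multiplier. Setting $A_U := DF^U(0)$ and $A_F := DF^F(0)$, Proposition~\ref{prop:sysgen} delivers positivity, forward-boundedness, cooperativity and concavity, while irreducibility of $A_U$ and $A_F$ follows from $b^{U,F}, h^{U,F} > 0$; hence Theorem~\ref{thm:extension} applies with Floquet multiplier $\lambda(\theta) = \rho(M(\theta))$, where $M(\theta) := e^{(1-\theta) T A_F} e^{\theta T A_U}$. The ordering is consistent with the convention $\mu(DF^2(0)) \geq \mu(DF^1(0))$ since $\mathcal{R}_0(\pi^F) > 1 > \mathcal{R}_0(\pi^U)$ translates into $\mu(A_F) > 0 > \mu(A_U)$; this also gives $\lambda(0) = e^{T \mu(A_F)} > 1$ and $\lambda(1) = e^{T \mu(A_U)} < 1$, so that once Theorem~\ref{thm:anydimension} delivers \eqref{def:threshold}, the threshold $\theta_*$ will automatically lie in $(0,1)$.

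The heart of the proof is the verification of $(B-1)$. Set $S := A_U - A_F$ and
\[
    P := \begin{pmatrix} 1 & 1 \\ 0 & 1 \end{pmatrix}, \qquad P^{-1} = \begin{pmatrix} 1 & -1 \\ 0 & 1 \end{pmatrix}.
\]
A direct multiplication gives
\[
    PS = \begin{pmatrix} d_J^F - d_J^U & (b^U - d_A^U) - (b^F - d_A^F) \\ h^U - h^F & d_A^F - d_A^U \end{pmatrix},
\]
and each of these four entries is strictly negative by the four componentwise inequalities encoded in~\eqref{hyp:parameters}. Since $(P^{-1})^*$ is the transpose of $P^{-1}$, the condition $(P^{-1})^{*} V_*(\theta) > 0$ then reduces to $V_{*,1}(\theta) > 0$ (automatic from Perron--Frobenius) together with $V_{*,2}(\theta) > V_{*,1}(\theta)$.

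To obtain $V_{*,2}(\theta) > V_{*,1}(\theta)$ for every $\theta \in [0,1]$, I would prove strict forward-invariance of the open cone $C := \{w \in \R^2 : w_2 > w_1 > 0\}$ under the linear flows $\dot w = A_F^T w$ and $\dot w = A_U^T w$. Writing $D := w_2 - w_1$ along such a flow, one finds $\dot D = (b + h + d_J) w_1 - (h + d_A) w_2$, so that on the critical boundary $\{D = 0, \, w_1 > 0\}$ one has $\dot D = (b + d_J - d_A) w_1$. This is strictly positive for the favorable parameters because $\mathcal{R}_0(\pi^F) > 1$ forces $b^F > d_A^F$ (hence $b^F + d_J^F > d_A^F$), and strictly positive for the unfavorable parameters precisely by the extra hypothesis $b^U + d_J^U > d_A^U$. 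Combined with cooperativity (which handles the remaining boundary piece $\{w_1 = 0\}$), this gives $e^{t A^T}(\overline{C} \setminus \{0\}) \subset C$ for every $t > 0$ and $A \in \{A_F, A_U\}$, whence $M(\theta)^{*}(\overline{C} \setminus \{0\}) \subset C$ for every $\theta \in [0,1]$. Since $V_*(\theta)$ is a positive fixed direction of $M(\theta)^{*}$, one concludes $V_*(\theta) \in C$, which closes the verification of $(B-1)$ and, via Theorem~\ref{thm:anydimension}, the proof.

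The main technical step is this cone-invariance argument: the extra hypothesis $b^U + d_J^U > d_A^U$ is consumed exactly here, and one has to chain the strict invariance provided by the favorable flow with the mere invariance under the unfavorable flow to place $V_*(\theta)$ inside the open cone for every $\theta$, rather than only in its closure.
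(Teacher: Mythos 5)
Your proposal is correct and follows the paper's overall strategy (Theorem~\ref{thm:anydimension} via condition $(B-1)$, with the same matrix $P$, the same computation of $PS<0$ from \eqref{hyp:parameters}, and the same reduction to $[V_*(\theta)]_2>[V_*(\theta)]_1$), but the way you establish that last inequality is genuinely different. The paper goes through Lemma~\ref{lem:Astar} (for a positive $2\times2$ matrix, $w_2>w_1$ iff $m_{11}+m_{21}<m_{12}+m_{22}$), then explicitly diagonalizes $DF^1(0)$ and $DF^2(0)$, writes out all four entries of $M(\theta)$, and verifies the entry inequality by monotonicity of an auxiliary function $\Psi(\beta,\gamma)$ --- about a page and a half of computation, with the hypothesis $b^U+d_J^U>d_A^U$ consumed in Lemma~\ref{LM1} to get $1+x_\sharp^->0$. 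You instead show that the open cone $C=\{w_2>w_1>0\}$ is strictly forward-invariant under both adjoint flows $\dot w=A^*w$, using exactly the same scalar inequality $b+d_J-d_A>0$ on the boundary $\{w_2=w_1\}$ (from $\mathcal{R}_0(\pi^F)>1$ for the favorable season, from the explicit hypothesis for the unfavorable one) and $h>0$ on $\{w_1=0\}$; since $M(\theta)^*=e^{\theta TA_U^*}e^{(1-\theta)TA_F^*}$, this places $V_*(\theta)$ in $C$. This is shorter, coordinate-free in spirit, and makes transparent \emph{why} the hypothesis $b^U+d_J^U>d_A^U$ appears. Two small points to tighten: (i) the final step ``$V_*(\theta)$ is a positive fixed direction, hence $V_*(\theta)\in C$'' is not literally immediate --- you should note that $M(\theta)^*$ leaves the proper cone $\overline C$ invariant, hence (by Brouwer on the normalized cross-section, or Perron--Frobenius for cones) has an eigenvector in $C$, and since $C\subset\R^2_{>0}$ and the only nonnegative eigendirection of a positive matrix is the Perron one, that eigenvector must be $V_*(\theta)$; (ii) the strict-invariance claim $e^{tA^*}(\overline C\setminus\{0\})\subset C$ deserves the standard subtangentiality/first-exit-time argument, but both of these are routine.
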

\begin{proof}
 We check assumption $(B-1)$  with
 \[
  P = \begin{pmatrix}
       1 & 1 \\
       0 & 1
      \end{pmatrix}, \quad
  (P^{-1})^* = \begin{pmatrix}
                1 & 0 \\
                -1 & 1
               \end{pmatrix}.
 \]
 More details in Section \ref{subs:proofN2}.
\end{proof}

\begin{remark}
    If instead of \eqref{hyp:parameters} we assume the stronger condition
    \begin{equation}
    \begin{pmatrix}
        -(h^F + d_J^F) + h^U + d_J^U & b^F - b^U \\
        h^F - h^U & -d_A^F + d_A^U
        \end{pmatrix}
        >0,
        \label{hyp:alternative}
    \end{equation}
    then assumption $(B-1)$ (or $(B-2)$) of Theorem \ref{thm:anydimension} applies with $P = I$ and no further computations are needed.
    
    We emphasize that \eqref{hyp:parameters} is more biologically relevant than \eqref{hyp:alternative}. The latter requires that the increase of the hatching rate between favorable and unfavorable season does more than compensate the decrease of juvenile death rate, which is highly debatable. This justifies the technical computations of Section \ref{subs:proofN2}.
    
    Note that in any case, no assumptions are made on $c_J^U$ and $c_J^F$, since the behavior is only determined by the linearization at $0$.
\end{remark}

\section{Proofs}
\label{sec:proofs}

\subsection{Proof of Theorem \ref{thm:anydimension}} 
\label{subs:proofanydim}

When there are only two dynamics within a period, that is when $K = 2$, we notice that the alternative $(i)-(ii)$ from Theorem \ref{thm:extension} is uniquely determined by the sign of the real function:
\[
 \theta \quad \mapsto \quad \rho \big( e^{(1-\theta) T \cdot D F^2(0)} e^{\theta T \cdot D F^1 (0)} \big) - 1.
\]

We notice that
\begin{lemma}
 The function $\rho : [0, 1] \to \R$ is $\mathcal{C}^1$ and satisfies
 \begin{equation}
 \rho'(\theta) = T \rho(\theta) \langle (D F^1 (0) - D F^2 (0)) V (\theta), V_* (\theta) \rangle.
 \label{eq:rhoprime}
\end{equation}
\label{lem:rhoprime}
\end{lemma}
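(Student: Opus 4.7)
The plan is to combine standard first-order perturbation theory for a simple eigenvalue with a clean formula for the derivative $M'(\theta)$, exploiting the fact that each $DF^k(0)$ commutes with its own one-parameter exponential.

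\textbf{Step 1: Regularity.} The map $\theta \mapsto M(\theta) = e^{(1-\theta)T A_2} e^{\theta T A_1}$, with $A_k := DF^k(0)$, is real-analytic from $[0,1]$ into $\mathcal{M}_N(\R)$. For each $\theta$, $M(\theta)$ is a positive matrix (product of two exponentials of irreducible Metzler matrices), so by Perron--Frobenius its spectral radius $\rho(\theta) > 0$ is a simple eigenvalue, strictly separated in modulus from the rest of the spectrum. Analytic perturbation theory of a simple eigenvalue (see e.g.\ Kato) then yields that $\rho(\theta)$ and the normalized eigenvectors $V(\theta)$, $V_*(\theta)$ defined by~\eqref{eq:M} together with $\|V(\theta)\|=1$ and $\langle V(\theta),V_*(\theta)\rangle=1$ depend analytically (hence $\mathcal{C}^1$) on $\theta \in [0,1]$.

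\textbf{Step 2: A perturbation identity.} Differentiating the right eigenvalue relation $M(\theta)V(\theta)=\rho(\theta)V(\theta)$ and pairing against $V_*(\theta)$ gives
\[
\langle M'(\theta) V(\theta), V_*(\theta)\rangle + \langle M(\theta) V'(\theta), V_*(\theta)\rangle
= \rho'(\theta)\langle V(\theta),V_*(\theta)\rangle + \rho(\theta)\langle V'(\theta),V_*(\theta)\rangle.
\]
Since $\langle M(\theta)V'(\theta),V_*(\theta)\rangle = \langle V'(\theta), M(\theta)^* V_*(\theta)\rangle = \rho(\theta)\langle V'(\theta),V_*(\theta)\rangle$ and $\langle V(\theta),V_*(\theta)\rangle=1$, the cross-terms cancel and one obtains the classical Hellmann--Feynman-type identity
\[
\rho'(\theta) = \langle M'(\theta)V(\theta), V_*(\theta)\rangle.
\]

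\textbf{Step 3: Computing $M'(\theta)$.} This is the crucial algebraic step. By direct differentiation,
\[
M'(\theta) = -T A_2\, e^{(1-\theta)T A_2}\, e^{\theta T A_1} + e^{(1-\theta)T A_2}\, T A_1\, e^{\theta T A_1}.
\]
Using that $A_2$ commutes with $e^{(1-\theta)T A_2}$ and that $A_1$ commutes with $e^{\theta T A_1}$, both terms can be rewritten with $M(\theta)$ factored on the appropriate side:
\[
M'(\theta) = T\bigl(M(\theta)A_1 - A_2 M(\theta)\bigr).
\]

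\textbf{Step 4: Conclusion.} Plugging this into Step 2 and applying the eigenrelations $M(\theta)V(\theta)=\rho(\theta)V(\theta)$ and $M(\theta)^* V_*(\theta)=\rho(\theta)V_*(\theta)$:
\[
\rho'(\theta) = T\langle M(\theta)A_1 V(\theta),V_*(\theta)\rangle - T\langle A_2 M(\theta) V(\theta),V_*(\theta)\rangle
= T\rho(\theta)\langle A_1 V(\theta),V_*(\theta)\rangle - T\rho(\theta)\langle A_2 V(\theta),V_*(\theta)\rangle,
\]
which is the claimed identity~\eqref{eq:rhoprime}.

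The only delicate point is Step 1 (smooth dependence of the Perron eigenpair on $\theta$), which is however a standard consequence of simplicity of $\rho(\theta)$ and analyticity of $M(\cdot)$; everything after that is an elementary calculation whose only trick is the commutativity exploited in Step 3.
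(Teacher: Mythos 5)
Your proof is correct and follows essentially the same route as the paper: establish smoothness of the Perron eigenpair, derive the Hellmann--Feynman identity $\rho'=\langle M'V,V_*\rangle$ (the paper differentiates the scalar identity $\rho=\langle MV,V_*\rangle$ rather than the eigenvector equation, but the cancellation is the same), and compute $M'(\theta)=Te^{(1-\theta)TA_2}(A_1-A_2)e^{\theta TA_1}$ via commutativity. Your Step 1, which explicitly invokes simplicity of the Perron root to justify analytic dependence, is in fact a slightly more careful justification than the paper's.
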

\begin{proof}
 By Perron-Frobenius theorem, $\rho(\theta)$ is the maximal root of the characteristic polynomial of $M(\theta)$, whose entries are analytic functions of $\theta$. In particular, it is $\mathcal{C}^1$.
 
 The principal eigenvector of norm $1$ of $M(\theta)$, that is $V(\theta)$, depends smoothly of $\theta$, as can be seen by uniqueness for all $\theta$.
 Then, $V_*(\theta)$ also depends smoothly of $\theta$ since the same argument applies to $M^* (\theta)$ and $V_* (\theta)$ is equal to the principal eigenvector $Y_* (\theta)$ of $M^* (\theta)$ divided by $\langle V(\theta), Y_* (\theta) \rangle > 0$, which is a smooth function of $\theta$.
 
 Let us write $M_i := DF^i (0)$ for $i \in \{1, 2\}$. We differentiate the identity $\rho(\theta) = \langle M(\theta) V(\theta), V_*(\theta) \rangle$ to obtain
 \begin{align*}
  \rho'(\theta) &= \langle M(\theta) V'(\theta), V_* (\theta) \rangle + \langle M'(\theta) V(\theta), V_* (\theta) \rangle + \langle M(\theta) V(\theta), V'_* (\theta) \rangle,
  \\
  &= \rho (\theta) \Big( \langle V'(\theta), V_*(\theta) \rangle + T \big( \langle V(\theta), M_1^* V_* (\theta) \rangle - \langle M_2 V(\theta), V_* (\theta) \rangle \big) +  \langle V(\theta), V'_* (\theta) \rangle \Big),
  \\
  &= T \rho(\theta) \langle (M_1 - M_2) V(\theta), V_* (\theta) \rangle,
 \end{align*}
 since $M'(\theta) = T e^{(1-\theta )T M_2} \big( M_1 - M_2 \big) e^{\theta T M_1}$ and $\langle V(\theta), V_*(\theta) \rangle \equiv 1$.
\end{proof}

Applying Theorem \ref{thm:extension} with the assumption that the forward orbits are bounded, we are left with either global asymptotic stability of $0$ is $\lambda \leq 1$, or the global stability of the unique positive periodic solution, if $\lambda > 1$.
Using formula \eqref{formula:Floquet}, we obtain \eqref{def:threshold} with $\rho(\theta_*) = 1$ (or $\theta_* = 0$ if $\rho(0) >1 $, and $\theta_* = 1$ if $\rho(1) \leq 1$) if $\rho$ is a decreasing function of $\theta$.

It remains to prove that any of the conditions $(A)$ or $(B)$ implies that $\rho$ is decreasing.
Under assumption $(B-1)$, with $S= DF^1 (0) - DF^2 (0)$ we get by Lemma \ref{lem:rhoprime}
\[
\frac{\rho'(\theta)}{T \rho(\theta)} = \langle S V(\theta), V_*(\theta) \rangle = \langle PS V(\theta), (P^{-1})^* V_*(\theta) \rangle < 0,
\]
since $PS < 0$, $V(\theta) \gg 0$ and $(P^{-1})^* V_*(\theta) > 0$ by assumption.
Note that this condition is equivalent to $S^* V_*(\theta) < 0$. Reasoning by density of $GL_N(\R)$ in $\mathcal{M}_N (\R)$, we assume that $S$ is invertible and check that if $S^* V_* < 0$ then $P = - S^{-1}$ satisfies the assumption, and conversely if $P S = Q < 0$, upon writing $(P^{-1})^* = (Q^{-1})^* S^* $ we get $(Q^{-1})^* S^* V_* > 0$, and by multiplication by $Q^* < 0$ this implies $S^* V_* < 0$.
The argument is symmetrical for assumption $(B-2)$ and is omitted here.

Under assumption $(B-3)$ we get by Lemma \ref{lem:rhoprime}
\[
 \frac{\rho'(\theta)}{T \rho(\theta)} = \langle S V(\theta), V_*(\theta) \rangle 
 < \langle P_* (\theta) Q(\theta) V(\theta), V_* (\theta) \rangle = - \lVert Q (\theta) V(\theta) \rVert^2 \leq 0,
\]
since $V(\theta), V_* (\theta) \gg 0$ (for the inequality), and $P V_* = - Q V$ (for the equality).

Finally, under assumption $(A)$ we get that $V(\theta) \equiv V$ and $V_* (\theta) \equiv V_*$ where $V$ (resp. $V_*$) is the principal eigenvector (resp. left principal eigenvector) of $D F^1 (0)$ (which is the same as the one of $D F^2 (0)$).
In this case,
\[
 \frac{\rho'(\theta)}{T \rho(\theta)} = \langle S V, V_* \rangle = \mu(DF^1 (0)) - \mu(DF^2 (0)),
\]
whence the result.

\subsection{Proofs of Proposition \ref{prop:sysgen} and Proposition \ref{prop:autonomous}}
\label{sec:autonomous}
Recall that by definition,
\[
 \forall X \in \R^2, \quad F(t, X) =  G(\pi(t), X) := \begin{pmatrix}
                                         \pi_1 X_2 - (\pi_2 + \pi_3 + \pi_4 X_1) X_1
                                         \\
                                         \pi_2 X_1 - \pi_5 X_2
                                        \end{pmatrix}.
\]

We first proceed to the proof of Proposition \ref{prop:sysgen}. If $X_i = 0$ for some $i \in \{1, 2\}$, then since $\pi(t) \geq 0$, $F_i (t, X) \geq 0$. Therefore the system is positive.

We recall the notation $\pi = (b, h, d_J, c_J, d_A)$. We have:
\begin{equation*}
  D_X F= \begin{pmatrix} -h-d_J- 2 c_J J & b \\  h &  -d_A \end{pmatrix}.
\end{equation*}
Thus, $D_X F$ is a Metzler matrix, so \eqref{S0} is monotone cooperative.

To check the concavity property, let $X \gg Y$. We simply compute
 \begin{equation*}
 D_X F(t, X)-D_X F(t, Y)= \begin{pmatrix} 2 c_J (Y_1-X_1) & 0 \\  0 &  0 \end{pmatrix} > 0.
  \end{equation*}

Then, we proceed to the proof of Proposition \ref{prop:autonomous}.
Calculating the equations of nullclines
\begin{equation*}
\begin{array}{l}
b A-h J-d_J J-c_J J^2=0,\\
 h J-d_A A=0,\\
 \end{array}
\end{equation*}
immediately yields all steady states as:
 \begin{equation*}
 S_0^* =(0,0),  \quad
 S_1^* = (\frac{bh}{d_J}-h-d_J ) \big(\frac{1}{c_J},\frac{h}{c_J d_A} \big).
 \end{equation*}
 Then, the sign of both components of $S_1^*$ is equal to the sign of $\mathcal{R}_0-1$, whence the result.

The stability and local behavior of solutions is detailed in
\begin{proposition}
If $\mathcal{R}_0 \leq 1$ the unique equilibrium point $S_0^*=(0,0)$ is either a stable node (when $\mathcal{R}_0 < 1$) or a singular point of superior order and of attracting type (when $\mathcal{R}_0 = 1$), in which case all the orbits in the neighborhood of the $S_0^*$ tend to $S_0^*$ along direction $\theta_1 :=\arctan\frac{h+d_J}{b}$.

If $\mathcal{R}_0 > 1$, the equilibrium point $S_0^*=(0,0)$ is of saddle type, and the direction of unstable manifold  is $\frac{h+d_J-d_A+\sqrt {(h+d_J-d_A)^2+4bh}}{2b}$. The equilibrium point $S_1^*$ is a stable node.

\label{prop:stab}
\end{proposition}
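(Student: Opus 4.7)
The plan is to linearize $G(\pi,\cdot)$ at each equilibrium and classify the singular point from the trace, determinant, and in the degenerate case a one-dimensional reduction. The Jacobian of $G$ at $S_0^*=(0,0)$ is
\[
J_0 = \begin{pmatrix} -(h+d_J) & b \\ h & -d_A \end{pmatrix},
\]
with $\mathrm{tr}(J_0)=-(h+d_J+d_A)<0$, $\det(J_0)=d_A(h+d_J)-bh=d_A(h+d_J)(1-\mathcal{R}_0)$, and discriminant $(h+d_J-d_A)^2+4bh>0$, so $J_0$ always has two real eigenvalues whose signs are controlled by $1-\mathcal{R}_0$.

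In the case $\mathcal{R}_0<1$, both eigenvalues are negative and $S_0^*$ is a stable node. In the case $\mathcal{R}_0>1$, $\det(J_0)<0$ gives one positive eigenvalue $\lambda_+=\tfrac{1}{2}\bigl(-(h+d_J+d_A)+\sqrt{(h+d_J-d_A)^2+4bh}\bigr)$ and one negative eigenvalue, so $S_0^*$ is a saddle; solving $(J_0-\lambda_+I)v=0$ with $v=(v_1,v_2)$ yields $v_2/v_1=((h+d_J)+\lambda_+)/b=(h+d_J-d_A+\sqrt{(h+d_J-d_A)^2+4bh})/(2b)$, which is the announced slope of the unstable manifold. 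For $S_1^*$ (present only when $\mathcal{R}_0>1$), the Jacobian shifts the $(1,1)$-entry of $J_0$ by $-2c_J J^*$ with $J^*=(\mathcal{R}_0-1)(h+d_J)/c_J$; the trace stays negative, and a direct computation gives $\det = 2c_J J^* d_A = 2d_A(\mathcal{R}_0-1)(h+d_J)>0$ while the discriminant remains of the form $(\cdot)^2+4bh>0$, so $S_1^*$ is a stable node.

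The delicate case is $\mathcal{R}_0=1$, where $\det(J_0)=0$. The eigenvalues are $0$ and $-(h+d_J+d_A)$, with right eigenvectors $v_c=(b,h+d_J)$ (whose slope is exactly $\tan\theta_1=(h+d_J)/b$) and $v_s=(b,-d_A)$. I would change to the eigenbasis $X=Y_1 v_c+Y_2 v_s$ and invoke the center manifold theorem to obtain a local invariant manifold $Y_2=\psi(Y_1)$ with $\psi(0)=\psi'(0)=0$. Projecting the only nonlinearity $-c_J J^2$ of \eqref{S0} onto the dual basis yields, on the center manifold,
\[
\dot{Y}_1 = -\frac{c_J\, b\, d_A}{h+d_J+d_A}\, Y_1^2 + O(Y_1^3),
\]
whose quadratic coefficient is strictly negative. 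Hence $S_0^*$ is a singular point of second order, attracting on the side $Y_1>0$ which covers the positive cone; combined with the global attractivity from Proposition \ref{prop:autonomous}, every orbit starting near $S_0^*$ in $\R_+^2$ enters the center manifold and approaches $S_0^*$ tangentially to $v_c$, i.e. along direction $\theta_1$. The main obstacle is precisely this last step: extracting the quadratic coefficient with the correct sign requires carefully inverting the change of basis and checking that none of the transformations flips a sign, which I expect to be routine but error-prone, and justifying rigorously that "attracting on one side" together with global attractivity forces the tangency direction of approach.
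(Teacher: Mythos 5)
Your treatment of the two hyperbolic cases coincides with the paper's: trace, determinant and discriminant of the Jacobian at $S_0^*$ and $S_1^*$, with the unstable direction at the saddle read off from the eigenvector equation rather than from the paper's slope equation $k=(h-d_Ak)/(-h-d_J+bk)$ — the two computations are the same. One arithmetic slip: at $S_1^*$ the determinant is $\det(J_0)+2c_JJ^*d_A=d_A(h+d_J)(\mathcal{R}_0-1)$, not $2c_JJ^*d_A$; you dropped the (negative) $\det(J_0)$ term, but the sign, and hence the stable-node conclusion, is unaffected.

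For the degenerate case $\mathcal{R}_0=1$ your route genuinely differs from the paper's. The paper passes to polar coordinates, derives the characteristic equation $G(\delta)=0$, and invokes the normal-sector theory of \cite{ZZT} to exclude spiral approach and identify the admissible directions $\tan\delta_1=(h+d_J)/b$, $\tan\delta_2=-d_A/b$ (only the first lies in the closed first quadrant). You instead perform a center-manifold reduction. Your quadratic coefficient is correct: the left null eigenvector is proportional to $(d_A,b)$, normalizing it against $v_c=(b,h+d_J)$ gives the factor $1/(b(h+d_J+d_A))$, and $J=bY_1+O(Y_1^2)$ on the manifold yields $\dot Y_1=-\tfrac{c_Jbd_A}{h+d_J+d_A}Y_1^2+O(Y_1^3)$; moreover $Y_1\propto d_AJ+bA\geq 0$ on $\R_+^2$, so the cone indeed sits on the attracting side. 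What your route buys is an explicit leading-order decay rate ($Y_1\sim C/t$) and a clean justification of the tangency direction via exponential attraction to the center manifold; what the paper's route buys is an off-the-shelf theorem that simultaneously rules out spiralling and lists all characteristic directions. One caution on your last step: do not close the argument by citing the global attractivity in Proposition \ref{prop:autonomous}, since in the paper that global statement is itself deduced from the present proposition together with forward boundedness and the Dulac criterion; the appeal is circular. It is also unnecessary: the repelling side of the center manifold is $Y_1<0$, which lies outside the forward-invariant cone $\R_+^2$, so a purely local argument (reduction principle plus invariance of the cone) already shows that every orbit starting near $S_0^*$ in $\R_+^2$ stays near $S_0^*$ and converges to it tangentially to $v_c$.
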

\begin{proof}
We divide the proof into three parts, depending on the sign of $\mathcal{R}_0 - 1$.
\paragraph{When $\mathcal{R}_0 = 1$.} Then \eqref{S0} becomes
\begin{equation}\label{S2}
\begin{array}{l}
\displaystyle\frac{dJ}{dt}=-\frac{b h }{d_A}J+b A-c_J J^2,\\
\displaystyle\frac{dA}{dt}= h A-d_A A.
\end{array}
\end{equation}
The determinant  of its Jacobian matrix is 
 \begin{equation*}
\begin{vmatrix} 
 -\frac{bh}{d_A}& b\\
   h & -d_A
 \end{vmatrix}=0.
 \end{equation*}
Hence, the equilibrium point $S_0^*$ of system \eqref{S2}  is an isolated critical point of higher order.
 
Obviously, system \eqref{S2} is analytic in a neighborhood of the origin. By Theorem 3.10 on page 79 of  \cite{ZZT}, any orbit of  \eqref{S2} tending to the origin must tend to it spirally or along a fixed direction, which depends on the characteristic equation of system  \eqref{S2}.
First of all, we introduce the polar coordinates  $J=r \cos \delta $, $A=r \sin \delta $,  where  $\delta \in [0,\frac{\pi}{2} ]$, $r  \in \mathbb{R}_+$  and we get the relation
\begin{equation*}
\begin{cases}
\dot r=r^{-1}(J \dot J +A \dot A)= r^m[R(\delta)+o(1)], \\
\dot {\delta} =r^{-2}(J \dot A - A \dot J)=r^{m-1}[G(\delta)+o(1)].
\end{cases}
\end{equation*}
This yields
\begin{equation*}
\begin{cases}
\dot r=r(-\frac{bh}{d_A}\cos^2\delta+b\cos\delta\sin\delta+h\cos\delta\sin\delta-d_A\sin^2\delta- c_J r \cos^3\delta),\\
 \dot {\delta} =h\cos^2\delta-d_A\cos\delta\sin\delta+(h+d_J)\cos\delta\sin\delta-b\sin^2\delta+ c_J r \cos^2\delta\sin\delta.
 \end{cases}
\end{equation*}
Then the characteristic equation of system \eqref{S2}  takes the form
\begin{equation}\label{S3}
G(\delta)=h\cos^2\delta-d_A\cos\delta\sin\delta+(h+d_J)\cos\delta\sin\delta-b\sin^2\delta=0,
\end{equation}
and we have 
\begin{equation*}
R(\delta)=-\frac{bh}{d_A}\cos^2\delta+b\cos\delta\sin\delta+h\cos\delta\sin\delta-d_A\sin^2\delta.
\end{equation*}
After equation \eqref{S3},  we get 
\begin{equation}
(\frac{h+d_J}{b}\cos\delta-\sin\delta)(d_A\cos\delta+b\sin\delta)=0.
\end{equation}
Thus
\begin{equation*}
\begin{cases}
\tan\ \delta_1=\frac{h+d_J}{b}, \\
\tan\delta_2=-\frac{d_A}{b}.
\end{cases}
\end{equation*}

Clearly, $G(\delta) = 0$ has two real roots which we denote by $\delta_1$ and  $\delta_2$. By the results in section  2 of  \cite{ZZT}, we know that neither the case no orbit of system \eqref{S2} can tend to the critical point $S_0^*$ spirally nor the singular case (if $G(\delta)\equiv 0 $).

 The orbits of the system tend to the origin along a characteristic direction $\delta_i$, given by solutions of the equation \eqref{S3}.
 Since the system is positive we need to consider $\delta \in [0,\frac{\pi}{2} ]$,  so $\delta_1=\arctan\frac{h+d_J}{b}$ is in first orthant and the orbits of the system approach the origin along the direction $\delta= \delta_J$.

 \paragraph{When $\mathcal{R}_0 > 1$.} We now write the Jacobian matrix  $\Jac$ of the system
 \begin{equation*}
  \Jac := \begin{pmatrix} -h-d_J-2 c_J E& b \\  h &  -d_A \end{pmatrix},
\end{equation*}
and consider $\Jac_0$ and $\Jac_1$ are the Jacobian matrices respectively at equilibrium point $S_0^*$ and $S_1^*$. At ${S_0}^*$,
 \begin{equation*}
  \Jac_0= \begin{pmatrix} -h-d_J& b \\  h &  -d_J \end{pmatrix},
\end{equation*}
whose eigenvalues read
\begin{equation*}
\begin{array}{l}
\lambda_1 =\frac{-(h+d_J+d_A)+\sqrt \Delta}{2},\\
\lambda_2 =\frac{-(h+d_J+d_A)-\sqrt \Delta}{2},\\
\end{array}
\end{equation*}
where $\Delta:=(h+d_J+d_A)^2-4[(h+d_J)d_A-hb]>0$ (since  $(h+d_J)d_A-hb<0$).
Then 
\begin{equation*}
\begin{array}{l}
\lambda_1+\lambda_2 =-(h+d_J+d_A) < 0,\\
\lambda_1\lambda_2 = (h+d_J)d_A - hb < 0,\\
\end{array}
\end{equation*}
so that one eigenvalue is positive and the another one is negative: $S_0^*$ is a saddle point.

To find the direction of the stable manifold or unstable manifold at $S_0^*$, we write 
\begin{equation*}
\frac{\dot A}{\dot J}= \frac {dA}{dt}=\frac{h J-d_A A}{-hJ-d_J J+bA-c_J J^2}=\frac{h-\frac{A}{J}}{-h-d_J+b\frac{A}{J}-c_J J}.
\end{equation*}
Consider $(J,A)$ tending to $S_0^*$ and let  $k :=\frac{A}{J}$.
Then $k$ is a solution to
\begin{equation*}
k=\frac{h-d_A k}{-h-d_J+bk},
\end{equation*}
which leads to two solutions $(k_1, k_2) \in \R_+^* \times \R_-^*$ given by
\begin{equation*}
\frac{h+d_J-d_A \pm \sqrt {(h+d_J-d_A)^2+4bh}}{2b}.
\end{equation*}
Hence, the boundary lines are $A=k_1 J$ and $A=k_2 J$ and by unstable manifold theorem we know that $k_1$ is the direction of unstable manifold at $(0,0)$ 

Then, at equilibrium point $S_1^*$,
 \begin{equation*}
  \Jac_1= \begin{pmatrix} h+d_J-\frac{2bh}{d_A}& b \\  h &  -d_A \end{pmatrix},
\end{equation*}
whose eigenvalues $\lambda_1, \lambda_2$ are real and satisfy
\begin{equation*}
\begin{array}{l}
\lambda_1+\lambda_2 = h+d_J-\frac{2bh}{d_A}-d_A< 0,\\
\lambda_1\lambda_2 = -d_A(h+d_J) +bh>0.
\end{array}
\end{equation*}
This implies that the two eigenvalues are real and negative, hence $S_1^*$ is a stable node.

\paragraph{Finally, if $\mathcal{R}_0 < 1$.} Then at equilibrium point $S_0^*$
 \begin{equation*}
  \Jac_0= \begin{pmatrix} -h-d_J& b \\  h &  -d_A \end{pmatrix} .
\end{equation*}
Because $(h+d_J)d_A-hb>0$, the eigenvalues are such that
\begin{equation*}
\begin{array}{l}
\lambda_1+\lambda_2 =-(h+d_J+d_A) < 0,\\
\lambda_1\lambda_2 = (h+d_J)d_A - hb >0,\\
\end{array}
\end{equation*}
with also the discriminant $(-h-d_J+d_A)^2+4bh>0$, hence they are both negative and the equilibrium point $S_0^*$ is a stable node.
\end{proof}

\begin{remark}
In particular when $h=0$ (no hatching), and the trivial equilibrium point $S_0^*$ is a stable node.
\end{remark}

We now prove that all the orbits of \eqref{S0} are forward bounded.
\begin{lemma}
Let
\[
    \tau^* := \sup_{t \geq 0} \frac{h(t)}{d_A(t)}, \quad J^* := \sup_{t \geq 0} \frac{b(t) \tau^* - h(t) - d_J(t)}{c_J(t)}.
\]
Under the assumptions of Proposition \ref{prop:sysgen}, $\tau^*$ and $J^*$ are finite. For all $X_0 \in \R_+^2$ and all real number $L \geq \max(0,J^*)$ such that $X_0 \in \Omega_L := [0,L] \times [0, \tau^* L]$, the solution $X(t)$ of \eqref{S0} with initial data $X_0$ belongs to $\Omega_M$.
\end {lemma}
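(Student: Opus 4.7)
The plan is to show finiteness of $\tau^*, J^*$ from the uniform bounds on $\pi$, then prove forward invariance of $\Omega_L$ by checking that the vector field $G(\pi(t), \cdot)$ points inward on each edge of $\partial \Omega_L$ (Nagumo's tangency condition), and finally conclude by observing that any $X_0 \in \R_+^2$ lies in some admissible $\Omega_L$.

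For finiteness, under the assumptions of Proposition \ref{prop:sysgen} we have $h(t), b(t), d_J(t) \leq C$ while $d_A(t) \geq c$ and $c_J(t) \geq c$. Therefore $\tau^* \leq C/c$, and then
\[
    J^* \leq \frac{C \tau^* + C + C}{c} \leq \frac{C^2/c + 2C}{c} < \infty.
\]

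For invariance, fix $L \geq \max(0, J^*)$ and examine the four sides of the rectangle $\Omega_L = [0, L] \times [0, \tau^* L]$. On the left edge $\{J = 0\}$, one has $\dot J = b(t) A \geq 0$, and on the bottom edge $\{A = 0\}$, one has $\dot A = h(t) J \geq 0$; both already needed for positivity. On the right edge $\{J = L\}$, we compute
\[
    \dot J = b(t) A - \bigl(h(t) + d_J(t) + c_J(t) L\bigr) L \leq b(t) \tau^* L - \bigl(h(t) + d_J(t) + c_J(t) L\bigr) L = L \bigl( b(t) \tau^* - h(t) - d_J(t) - c_J(t) L \bigr),
\]
which is nonpositive by the definition of $J^*$ and the choice $L \geq J^*$. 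On the top edge $\{A = \tau^* L\}$, we have
\[
    \dot A = h(t) J - d_A(t) \tau^* L \leq h(t) L - d_A(t) \tau^* L = L \bigl( h(t) - d_A(t) \tau^* \bigr) \leq 0,
\]
again by the definition of $\tau^*$. Since $G(\pi(t), \cdot)$ is piecewise-continuous in $t$ and Lipschitz in $X$ on $\Omega_L$, a standard Nagumo-type argument (applied separately on each time interval where $\pi$ is continuous, then concatenated) shows that $\Omega_L$ is positively invariant.

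The main obstacle, and it is a mild one, is to handle the piecewise-continuous time dependence together with the fact that the inward-pointing inequalities on $\partial \Omega_L$ are only weak. This is handled in the classical way: one perturbs to the larger rectangle $\Omega_{L+\varepsilon}$, whose boundary the vector field crosses strictly inward, and then lets $\varepsilon \to 0$; alternatively, one uses the comparison principle for cooperative systems, replacing $X(t)$ by the solution of the enlarged system and passing to the limit. Finally, given any $X_0 \in \R_+^2$, one chooses $L \geq \max\bigl(J^*, (X_0)_1, (X_0)_2 / \tau^*\bigr)$ so that $X_0 \in \Omega_L$, which establishes forward boundedness of every orbit.
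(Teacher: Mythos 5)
Your proof is correct and follows essentially the same route as the paper: finiteness of $\tau^*$ and $J^*$ from the uniform bounds $c \leq d_A, c_J$ and $\pi \leq C\mathds{1}$, then forward invariance of the rectangle $\Omega_L$ by checking that the vector field points inward on the top and right edges (the left and bottom edges being covered by positivity). The only difference is that you spell out the Nagumo/perturbation argument needed to pass from the weak inward-pointing inequalities to invariance, which the paper leaves implicit.
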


\begin{proof}
 Under the assumptions of Proposition \ref{prop:sysgen}, $c_J \geq c > 0$ and $d_A \geq c$ while all parameters are smaller than $C > 0$, hence $J^*$ and $\rho^*$ are finite.
 
 For $L > 0$ we define the area rectangle $\Omega_L$ surrounded by four line segments $\ell_i$ with outward normal vector $\nu_i$:
 \begin{equation*}
 \begin{array}{l}
 \ell_1=\{(J,A) |  J=0 ,0\leq  A\leq \tau^* L) \}, \quad \nu_1 = (-1, 0),
  \\[2pt]
\ell_2=\{(J,A) | J=L, 0\leq  A\leq \tau^* L) \}, \quad \nu_2 = (1, 0),
 \\[2pt]
 \ell_3=\{(J,A) | 0\leq J \leq L , A=0\}, \quad \nu_3 = (0, -1)
  \\[2pt]
 \ell_4=\{(J,A) | 0\leq J \leq L ,  A=\tau^* L\}, \quad \nu_4 = (0, 1).
\end{array}
\end{equation*}
To prove that $\Omega_L$ is positively invariant, since the system is positive, we only need to show that the scalar products of $\frac{dX}{dt}$ and $\nu_i$ on $\ell_i$ for $i \in \{2, 4\}$ are non-positive:
\begin{equation*}
\begin{array}{l}
    \nu_4 \cdot G(\pi, X) = h J - d_A \tau^* L \leq 0 \text{ since } J \leq L \text{ and } d_A \tau^* \geq h,
      \\
     \nu_2 \cdot G(\pi, X) = b A - h L- d_J L- c_J L^2.
\end{array}
\end{equation*}
Since $A < \tau^* L$, $\nu_2 \cdot G(\pi, X) \leq 0$ on $\ell_2$ as soon as $b \tau^* - h - d_J -c_J L \leq 0$, that is
\[
    L \geq \frac{b \tau^* - h - d_J}{c_J}.
\]
Upon taking $L \geq J^*$ this inequality is satisfied. For $L$ large enough such that $X_0 \in \Omega_L$, we have proved that for all $t > 0$, the solution $X(t)$ of \eqref{S0} belongs to $\Omega_L$.
\end{proof}

The Dulac (divergence) criterion ensures that the system has no limit cycle, since:
\[
\mathrm{div}(F)= -(h + d_J + c_J J + d_A) < 0.
\]
This concludes the proof.

\subsection{Proof of Theorem \ref{thm:main}}
\label{subs:proofN2}

Theorem \ref{thm:main} is a consequence of Theorem \ref{thm:anydimension}, condition $(B-1)$. To check this condition, we apply the following result (specific to the dimension $N = 2$) to the positive matrix $M(\theta)$:
\begin{lemma}
Let $S \in \mathcal{M}_2 (\R)$ be a positive matrix, and assume vector $W = (w_1,w_2) \gg 0$ satisfies $S^* W = \mu W$ for some $\mu > 0$ ({\it i.e.} $W$ is the principal eigenvector of $S^*$). Then, $w_2 > w_1$ if and only if
\begin{equation}
    s_{11} + s_{21} < s_{12} + s_{22},
    \label{hyp:Astar}
\end{equation}
Where $s_{11}$,  $s_{21}$,  $s_{12}$ and $s_{22}$ are the elements of matrix $S$.
\label{lem:Astar}
\end{lemma}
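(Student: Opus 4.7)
The plan is to work directly with the two scalar equations encoded in $S^{*} W = \mu W$, namely $s_{11} w_1 + s_{21} w_2 = \mu w_1$ and $s_{12} w_1 + s_{22} w_2 = \mu w_2$, and to avoid any explicit manipulation of the characteristic polynomial of $S$. Setting $r := w_2/w_1 > 0$ and rearranging, these two equations give the dual expressions
\[
 r = \frac{\mu - s_{11}}{s_{21}} = \frac{s_{12}}{\mu - s_{22}}.
\]
Positivity of all entries of $S$ and of $W$ ensures that the numerators and denominators above are strictly positive, which in passing forces $\mu > s_{11}$ and $\mu > s_{22}$.

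From the left-hand identity one reads $r > 1 \Longleftrightarrow \mu > s_{11} + s_{21}$, and from the right-hand identity $r > 1 \Longleftrightarrow \mu < s_{12} + s_{22}$ (the inequality flipping because $\mu - s_{22} > 0$ sits in the denominator). Chaining both equivalences yields the sandwich
\[
 w_2 > w_1 \ \Longleftrightarrow \ s_{11} + s_{21} < \mu < s_{12} + s_{22},
\]
which in particular implies \eqref{hyp:Astar}. For the converse direction I would argue by contraposition: if $r \leq 1$, the weak versions of the same equivalences yield $s_{12} + s_{22} \leq \mu \leq s_{11} + s_{21}$, contradicting \eqref{hyp:Astar}.

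The main (minor) obstacle I anticipate is remembering to use \emph{both} rows of the eigenvalue equation simultaneously: a single row only produces one of the two bounds on $\mu$ and is insufficient to recover \eqref{hyp:Astar}. Once both identities for $r$ are on the table, the rest is essentially a one-step comparison, and no reference to Perron--Frobenius theory or to the characteristic polynomial of $S$ is needed beyond the hypothesis that $W$ is a positive eigenvector of $S^{*}$.
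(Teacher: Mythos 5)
Your proof is correct and follows essentially the same route as the paper: both write the two component equations of $S^{*}W=\mu W$, introduce the ratio $w_2/w_1$, and deduce the sandwich $s_{11}+s_{21}<\mu<s_{12}+s_{22}$. Your converse direction is in fact cleaner than the paper's (which argues by a somewhat roundabout contradiction), since you observe from the start that each of the two bounds on $\mu$ is \emph{equivalent} to $w_2>w_1$, making the reverse implication immediate.
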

\begin{proof}
We write $S W=\mu W$ as
\begin{equation*}
\begin{cases}
s_{11}w_1+s_{21} w_2=\mu w_1,\\
s_{12}w_1+s_{22} w_2=\mu w_2,\\
\end{cases}
\iff \begin{cases}
s_{11}+s_{21}\frac{w_2}{w_1}=\mu, \\
s_{12} \frac{w_1}{w_2}+s_{22}=\mu.  \\
\end{cases}
\end{equation*}

If $0 < w_1 < w_2$, since $S \gg 0$ we deduce that $s_{11}+s_{21} <  \rho < s_{12}+s_{22}$.

Conversely, if $ s_{11}+s_{21} <  s_{12}+s_{22}$, subtracting the previous equalities we obtain
\[
\mu (1 - \frac{w_2}{w_1}) = s_{11} - s_{12} + \frac{w_2}{w_1}(s_{21}-s_{22}) < (s_{22} - s_{21})(1 - \frac{w_2}{w_1}).
\]
By contradiction, we assume that $w_2 < w_1$. Then $\mu < s_{22} - s_{21}$. Injecting this inequality into the previous equality we obtain
\[
s_{12} + \frac{w_2}{w_1} s_{22} < (s_{22} - s_{21}) \frac{w_2}{w_1},
\]
whence $s_{12} < - \frac{w_2}{w_1} s_{21}$, which contradicts $S > 0$. Hence $w_2 > w_1$.
\end{proof}

Lemma \ref{lem:Astar} is satisfied by $M(\theta)$, so that condition $(B-1)$ holds with $P = \begin{pmatrix} 1 & 1 \\ 0 & 1 \end{pmatrix}$. Indeed, $(P^{-1})^* = \begin{pmatrix} 1 & 0 \\ -1 & 1 \end{pmatrix}$ and $(P^{-1})^* V_* > 0$ with $V_* \gg 0$ if and only if $[V_*]_2 > [V_*]_1$, hence by \eqref{hyp:parameters} we have $P \big( DF^2 (0) - DF^1 (0) \big) < 0$.

The remaining of the proof is devoted to checking that $M_{12} (\theta) + M_{22} (\theta) - M_{11} (\theta) - M_{21} (\theta) > 0$.
To this aim, we diagonalize
\[
DF^1(0) =
\begin{pmatrix}
-h^U - d_J^U & b^U \\
h^U & - d_A^U
\end{pmatrix} \text{ and } DF^2(0) = \begin{pmatrix}
- h^F - d_J^F & b^F \\
 h^F & - d_A^F
\end{pmatrix}
\]
by
\[
DF^1(0) = P_U \begin{pmatrix} \lambda_U^+ & 0 \\ 0 & \lambda_U^- \end{pmatrix} P_U^{-1},\quad DF^2(0) = P_F \begin{pmatrix} \lambda_F^+ & 0 \\ 0 & \lambda_F^- \end{pmatrix} P_F^{-1},
\]
where for $\sharp \in \{U, F\}$,
\[
P_{\sharp} = \begin{pmatrix} 1 & 1 \\ x_{\sharp}^+ & x_{\sharp}^- \end{pmatrix}, \quad P_{\sharp}^{-1} = \frac{1}{x_{\sharp}^- - x_{\sharp}^+}\begin{pmatrix} x_{\sharp}^- & -1 \\ -x_{\sharp}^+ & 1 \end{pmatrix}
\]
and
\begin{align*}
\lambda_{\sharp}^{\pm} &= -\frac{1}{2}(h^{\sharp} + d_J^{\sharp} + d_A^{\sharp}) \pm \frac{1}{2} \sqrt{(h^{\sharp}+ d_J^{\sharp} - d_A^{\sharp})^2 + 4 h^{\sharp} b^{\sharp}}, 
\\
x_{\sharp}^{\pm} &= \frac{\lambda_{\sharp}^{\pm}+h^{\sharp}+d_J^{\sharp}}{b^{\sharp}}, \\
&= \frac{1}{2 b^{\sharp}}(h^{\sharp} + d_J^{\sharp} - d_A^{\sharp}) \pm \frac{1}{2 b^{\sharp}} \sqrt{(h^{\sharp}+ d_J^{\sharp} - d_A^{\sharp})^2 + 4 h^{\sharp} b^{\sharp}}.
\end{align*}

The condition of Lemma \ref{lem:Astar} will follow from:
\begin{lemma}\label{LM1}
 For $\sharp \in \{U, F\}$, we have $x_{\sharp}^-<0<x_{\sharp}^+$ and   $1+x_{\sharp}^->0$.
% (3): $x_2^--x_1^+ <0 $;\\
%  (4): $ x_1^--x_2^->0$;\\
  %(5): $x_1^+-x_2^+ <0$;\\ 
 % (6): $x_2^+-x_1^->0$.
\end{lemma}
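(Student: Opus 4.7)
The plan is to recognize that $x_\sharp^+$ and $x_\sharp^-$, as defined in the excerpt, are precisely the two roots of the quadratic polynomial
\[
q_\sharp(x) := b^\sharp x^2 + (d_A^\sharp - h^\sharp - d_J^\sharp)\, x - h^\sharp,
\]
obtained by requiring that $(1,x)^\top$ be an eigenvector of $DF^\sharp(0)$. With this viewpoint, both claims of the lemma reduce to sign checks of $q_\sharp$ at well-chosen points rather than manipulations of square roots.

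First I would evaluate $q_\sharp(0) = -h^\sharp < 0$, using that $h^\sharp > 0$ by the irreducibility hypothesis \eqref{irre} applied to $DF^\sharp(0)$. Since the leading coefficient $b^\sharp$ of $q_\sharp$ is strictly positive, this sign information immediately implies that the two real roots straddle $0$, giving $x_\sharp^- < 0 < x_\sharp^+$. This takes care of the first half of the lemma.

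For the inequality $1 + x_\sharp^- > 0$, the strategy is to show that $-1$ lies outside the interval $[x_\sharp^-, x_\sharp^+]$. A direct computation gives $q_\sharp(-1) = b^\sharp + d_J^\sharp - d_A^\sharp$. Since $x_\sharp^+ > 0 > -1$, having $q_\sharp(-1) > 0$ forces $-1 < x_\sharp^-$, as required. The remaining task is therefore to verify $b^\sharp + d_J^\sharp > d_A^\sharp$ for both $\sharp \in \{U,F\}$. For $\sharp = U$ this is one of the standing hypotheses of Theorem \ref{thm:main}. For $\sharp = F$ it is not assumed directly, so I would derive it from $\mathcal{R}_0(\pi^F) > 1$: rewriting this inequality as $b^F h^F > d_A^F(h^F + d_J^F)$ and using $h^F > 0$ yields $b^F > d_A^F + d_A^F d_J^F / h^F \geq d_A^F$, and then adding $d_J^F \geq 0$ gives $b^F + d_J^F > d_A^F$.

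The main obstacle, if any, is the last step: the hypothesis $b + d_J > d_A$ is imposed on the unfavorable season but only on the favorable season through $\mathcal{R}_0(\pi^F) > 1$, so one must be careful to invoke the correct combination of assumptions there. Once that is in place, everything else is elementary polynomial sign analysis and avoids any explicit manipulation of the radicals appearing in the closed form of $x_\sharp^\pm$.
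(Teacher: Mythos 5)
Your proof is correct, and it takes a slightly different route from the paper's. The paper argues directly on the closed-form expression: the first claim is read off from the fact that the radical dominates $|h^\sharp+d_J^\sharp-d_A^\sharp|$, and for $1+x_\sharp^->0$ it computes $1+x_\sharp^-$ explicitly and squares $2b^\sharp+h^\sharp+d_J^\sharp-d_A^\sharp$ to compare it with the discriminant, which after simplification reduces to $b^\sharp+d_J^\sharp-d_A^\sharp>0$. You instead observe that $x_\sharp^{\pm}$ are the roots of the quadratic $q_\sharp(x)=b^\sharp x^2+(d_A^\sharp-h^\sharp-d_J^\sharp)x-h^\sharp$ coming from the eigenvector equation, and reduce both claims to sign evaluations $q_\sharp(0)=-h^\sharp<0$ and $q_\sharp(-1)=b^\sharp+d_J^\sharp-d_A^\sharp>0$; this avoids all manipulation of radicals and makes transparent why the same inequality $b^\sharp+d_J^\sharp>d_A^\sharp$ is the only real input. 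Crucially, your justification of that inequality coincides exactly with the paper's: the explicit hypothesis of Theorem~\ref{thm:main} for $\sharp=U$, and $\mathcal{R}_0(\pi^F)>1$ (which gives $b^F>d_A^F$) for $\sharp=F$. Your version is arguably cleaner and also silently handles the point, implicit in the paper, that one may only take square roots of the squared inequality because $2b^\sharp+h^\sharp+d_J^\sharp-d_A^\sharp>0$.
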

\begin{proof}
The first inequalities follow directly from the above expression of $x_{\sharp}^{\pm}$. Then, we compute $1+x_{\sharp}^-=   \frac{2 b^{\sharp} + h^{\sharp} + d_J^{\sharp} - d_A^{\sharp} -\sqrt{(h^{\sharp} + d_J^{\sharp} - d_A^{\sharp})^2 + 4 h^{\sharp} b^{\sharp}}}{2 b^{\sharp}}$.
 We have
 \begin{align*}
(2 b^{\sharp}+ h^{\sharp} + d_J^{\sharp} - d_A^{\sharp} )^2 &= 4 (b^{\sharp})^2 + 4 b^{\sharp} (h^{\sharp} + d_J^{\sharp} - d_A^{\sharp}) + (h^{\sharp}+ d_J^{\sharp} - d_A^{\sharp})^2 
\\
&> (h^{\sharp} + d_J^{\sharp} - d_A^{\sharp})^2 + 4 h^{\sharp} b^{\sharp}
 \end{align*}
since $b^{\sharp} + d_J^{\sharp} - d_A^{\sharp} > 0$ (explicit assumption in Proposition \ref{prop:autonomous} for $\sharp = U$, and from $\mathcal{R}(\pi^F) > 1$ for $\sharp = F$). It implies  $1+x_{\sharp}^->0$.
%we define $z_1:= \sqrt{(h_1+ d_1 - d_2)^2 + 4 h_1 b}$  and $z_2:= \sqrt{(h_2+ d_1 - d_2)^2 + 4 h_2 b}$ \\
%However, letting $z(h) = \sqrt{(h+d_1-d_2)^2 + 4 h b}$, \[
%z'(h) = \frac{h + d_1 - d_2 + 2 b}{\sqrt{(h+d_1-d_2)^2 + 4 h b}}.
%\]
%we find that $z(0) = \lvert d_1 - d_2 \rvert$ and $z' > 1$ as soon as $b + d_1 - d_2 > 0$. Indeed,

%Under this assumption, we have $(z_1 - z_2)^2 > (h_1 - h_2)^2$, i.e $z_2 - z_1 > h_2 - h_1$\\
%Then we can find the last four inequalities  .
\end{proof}

 Thanks to the above diagonalization, we can write $M = M(\theta) = (m_{ij})_{1 \leq i, j \leq 2}$ as 
\begin{equation*}
\begin{array}{l}
 m_{11}=(\beta^+ x_F^- - \beta^-x_F^+)(\gamma^+x_U^- -\gamma^-x_U^+)+(-\beta^+ +\beta^-)(x_U^+x_U^- \gamma^+-x_U^+x_U^- \gamma^-),
 \\[5pt]
 m_{12}=(\beta^+x_F^- - \beta^-x_F^+)( -\gamma^+ +\gamma^-)+(-\beta^+ +\beta^-)(- x_U^+ \gamma^+ +x_U^- \gamma^-),
 \\[5pt]
 m_{21}=( x_F^+x_F^- \beta^+ -x_F^+x_F^- \beta^-)(\gamma^+ x_U^- - \gamma^- x_U^+ )+(-x_F^+ \beta^+ +x_F^- \beta^- )( x_U^+ x_U^- \gamma^+ - x_U^+ x_U^- \gamma^-),
 \\[5pt]
 m_{22}= (x_F^+x_F^- \beta^+-x_F^+x_F^- \beta^-)( -\gamma^++\gamma^-)+( -x_F^+ \beta^++x_F^- \beta^- )(-x_U^+ \gamma^++x_U^- \gamma^- ),
 \end{array}
\end{equation*}
where
\begin{align*}
\begin{array}{l}
 \beta^+:= e^{\lambda_F^+ (1 - \theta) T} , \quad \beta^-:= e^{\lambda_F^- (1 - \theta) T},
 \\[5pt]
  \gamma^+:= e^{\lambda_U^+ \theta T}, \quad  \gamma^-:= e^{\lambda_U^- \theta T},\\
  \alpha: =\df{b^U b^F }{\sqrt{\big( (h^U + d_J^U - d_A^U)^2 + 4 h^U b^U \big) \big( (h^F +d_J^F - d_A^F)^2 + 4 h^F b^F \big)}}.\\
  \end{array}
\end{align*}

Proving $m_{11}+m_{21}< m_{12}+m_{22}$ therefore amounts to checking
\begin{multline}
\beta^+\gamma^+(x_F^--x_U^+)(1+x_F^+)(1+x_U^-)+\beta^+\gamma^-(x_U^--x_F^- )(1+x_F^+)(1+x_U^+)\\
+\beta^-\gamma^+(x_U^+-x_F^+)(1+x_U^-)(1+x_F^-)+\beta^-\gamma^-(x_F^+-x_U^-)(1+x_F^-)(1+x_U^+)<0.
\label{ineq:betagamma}
\end{multline}
We introduce $\Psi : \R_+^2 \to \R$ as
\begin{multline*}
\Psi(\beta,\gamma) : =\beta\gamma(x_F^--x_U^+)(1+x_F^+)(1+x_U^-)+\beta(x_U^--x_F^- )(1+x_F^+)(1+x_U^+)\\
+\gamma(x_U^+-x_F^+)(1+x_U^-)(1+x_F^-)+(x_F^+-x_U^-)(1+x_F^-)(1+x_U^+),
\end{multline*}
so that \eqref{ineq:betagamma} is equivalent to $\Psi(\frac{\beta^+}{\beta^-} , \frac{\gamma^+}{\gamma^-}) < 0$.
First, it is easily checked that $\Psi(1,1)=0$, $ \beta^+>\beta^-$ and $ \gamma^+>\gamma^-$.
Then, by Lemma \ref{LM1}, $x_F^- < 0 < x_U^+$ and $1 + x_{\sharp}^{\flat} > 0$ for $\sharp \in \{ U, F\}$ and $\flat \in \{+, - \}$. Hence for $\beta > 1$,  we have
\begin{align*}
\frac{\partial\Psi(\beta,\gamma)}{\partial \gamma}  & = \beta(x_F^--x_U^+)(1+x_F^+)(1+x_U^-)+(x_U^+-x_F^+)(1+x_U^-)(1+x_F^-)\\
& < (x_F^--x_U^+)(1+x_F^+)(1+x_U^-)+(x_U^+-x_F^+)(1+x_U^-)(1+x_F^-) \\
&= (x_F^--x_F^+)(1+x_U^-)(1+x_U^+).
\end{align*} 
Symmetrically, for $\gamma > 1$ we have
\begin{align*}
\frac{\partial \Psi(\beta,\gamma)}{\partial \beta}  & = \gamma(x_F^--x_U^+)(1+x_F^+)(1+x_U^-)+(x_U^--x_F^- )(1+x_F^+)(1+x_U^+)\\
& < (x_F^--x_U^+)(1+x_F^+)(1+x_U^-)+(x_U^--x_F^- )(1+x_F^+)(1+x_U^+)\\
& =  (x_U^--x_U^+)(1+x_F^-)(1+x_F^+).
\end{align*} 

Applying Lemma \ref{LM1} again, we deduce that if $\beta, \gamma > 1$ then
\begin{equation*}
\frac{\partial \Psi}{\partial \gamma}, \frac{\partial \Psi}{\partial \beta} <0.
\end{equation*}
In particular $\Psi(\frac{\beta^+}{\beta^-} , \frac{\gamma^+}{\gamma^-}) < 0$, and this concludes the proof.

\section{Discussion and extensions}
\label{sec:discussion}

\paragraph{Geometric viewpoint.} We denote by $\Upsilon \times \Upsilon_*$ the graph of $\upsilon := (V, V_*) : [0, 1] \to (\R_+^*)^{2N}$. Then we define $r(\theta) := \frac{\rho'(\theta)}{T \rho(\theta)} = \langle S V(\theta), V_* (\theta) \rangle$.
Denoting by $\psi_S : \R^N \times \R^N \to \R$ the bilinear form $(V,W) \mapsto \langle A V, W \rangle$,  we get $r = \psi_S \circ \upsilon$.
Let $X_S := \{ \psi_S < 0 \}$, it is an open and radial subset of $\R^{2N}$ (if $Y \in X_S$ and $\lambda > 0$, then $\lambda Y \in X_S$). $\rho (M)$ is decreasing if and only if $r$ is decreasing, which is equivalent to $\Upsilon \times \Upsilon_* \subset X_S$. Up to changing $S$ into $-S$, assumption \eqref{simplecondition} amounts to $\upsilon(0), \upsilon(1) \in X_S$.

The case $(A)$  implies that $\Upsilon \times \Upsilon_*$ is a singleton, in which case \eqref{simplecondition} simply rewrites $(\mu_2 - \mu_1)^2 > 0$.

\paragraph{Practical computations in higher dimension.} Theorem \ref{thm:anydimension} suggests $4$ different sufficient conditions on $DF^1(0)$ and $DF^2(0)$ to obtain \eqref{def:threshold}. Apart from the trivial situations when $DF^1(0) - DF^2(0)$ has a sign or when the two matrices share the same principal eigenvector, how applicable are these conditions when $N > 2$ If $DF^i(0)$ is diagonalizable for $i \in \{1,2\}$, which we write
\[
    DF^i (0) = P_i^{-1} \diag((\lambda^{(k)}_i)_{1 \leq k \leq N}) P_i,
\]
then we can compute
\[
    M_{i,j} (\theta) = \sum_{j', j'' = 1}^N P_1^{-1} (i, j') Q(j',j'') P_2(j'',j) e^{T \big(\theta \lambda_1^{(j')} + (1 - \theta) \lambda_2^{(j'')} \big)},
\]
where $Q(j',j'') = \sum_{k=1}^N P_1(j',k) P_2^{-1} (k, j'')$.
For any matrix $\Gamma = (\gamma(i,j))_{1 \leq i,j \leq N} \in GL_N (\R)$ such that $\Gamma M(\theta) > 0$, we obtain $\Gamma V(\theta) > 0$ (where $V(\theta)$ is the principal eigenvector of $M(\theta)$). Then, a sufficient condition for \eqref{def:threshold} is given by $(DF^2(0) - DF^1(0)) \Gamma^{-1} < 0$.
Symmetrically, if $M(\theta) \Gamma > 0$ then a sufficient condition is given by $\Gamma^{-1} (DF^2(0) - DF^1 (0)) < 0$.

In order to get better conditions than the obvious ones, we require that $\Gamma \not\geq 0$.
We note that
\[
    \big[ \Gamma M(\theta) \big]_{i,j} = \sum_{k,j',j'' = 1}^N \gamma(i,k) P_1^{-1} (k, j') P_2(j'',j) Q(j', j'') e^{T \big(\theta \lambda_1^{(j')} + (1 - \theta) \lambda_2^{(j'')} \big)}.
\]

\paragraph{Log-convexity of the spectral radius.} A celebrated result of Kingman \cite{K} asserts that if the entries of a nonnegative matrix are log convex functions of a variable then so is the spectral radius of the matrix. If this property applies to the positive matrix $M(\theta)$, $\theta \mapsto \rho(M(\theta))$ is log-convex. In this case, it is monotone (yielding \eqref{def:threshold}) provided that the derivatives at $0$ and $1$ have the same sign, that is:
\begin{equation}
    \big( \mu_2 - \langle DF^1(0) V^2, V^2_* \rangle \big) \big( \langle DF^2(0) V^1, V^1_* \rangle - \mu_1 \big) > 0,
    \label{simplecondition}
\end{equation}
where $\mu_i = \mu(DF^i (0))$, and $V^i$ (resp. $V^i_*$) is the principal eigenvector of $DF^i(0)$ (resp. of $DF^i(0)^*$) with $V^i, V_*^i \gg 0$ and $\langle V^i, V^i \rangle = 1 = \langle V^i, V_*^i \rangle$.

When $DF^i(0)$ are diagonalizable ($i \in \{1, 2\}$), the above formula shows that
\[
    M_{i,j} (\theta) = \sum_{n=1}^{N^2} \alpha_n(i,j) e^{\beta_n(i,j) \theta}
\]
for some $\alpha, \beta$. In cases when $M_{i,j}$ can be proved to be a log-convex function of $\theta$, \eqref{def:threshold} holds under assumption \eqref{simplecondition}.

\paragraph{Computation of the second-order derivative.} A more general condition for \eqref{def:threshold} than the monotonicity of $\rho$ would be that $\rho$ is either concave or convex (or log-concave, or log-convex). To formulate this condition we compute the second-order derivative of $\log(\rho)$ from~\eqref{eq:rhoprime} as
\[
    \frac{d}{d\theta} \big( \log(\rho(\theta)) \big) = r'(\theta) =  \underbrace{\langle S V'(\theta), V_* (\theta) \rangle }_{=: R_1}+ \underbrace{ \langle S V(\theta), V_* '(\theta) \rangle}_{=: R_2},
\]
where
\begin{equation}
S=DF^1(0)-DF^2 (0). \label{eq:A}
\end{equation}
Differentiating with respect to $\theta$ the eigenvector equations for $V(\theta)$ and $V_*(\theta)$ along with their normalizations $\langle V(\theta), V(\theta) \rangle = 1$ and $\langle V(\theta), V_*(\theta) \rangle = 1$ yields:
\begin{align*}
(M(\theta) -\rho(\theta) I) V' (\theta) &= (\rho'(\theta) I - M'(\theta)) V(\theta),
\\
(M^*(\theta) -\rho(\theta) I) V'_* (\theta) &= (\rho'(\theta) I - (M^*)'(\theta)) V_*(\theta),
\\
\langle V(\theta), V'(\theta) \rangle &= 0 = \langle V'(\theta), V_*(\theta) \rangle + \langle V(\theta), V'_*(\theta) \rangle.
\end{align*}
Dropping the argument $\theta$, we note that $V', V'_*$ are well-defined from these linear equations since $\Ima(M - \rho I) = (V_* \R)^{\perp}$ (and symmetrically $\Ima(M^* - \rho I) = (V \R)^{\perp}$) and the scalar product conditions give uniqueness. We introduce the notation $H := (V \R)^{\perp}$ (resp. $H_* := (V_* \R)^{\perp}$) for the hyperplane with normal vector $V$ (resp. $V_*$).
We also introduce the Perron projection operator $\Pi :=V_*V^* \in \mathcal{L} (\R^N)$, and its adjoint $\Pi^*=V {V_*}^*$.

In particular, $M- \rho I \in \mathcal{L}(H ,H_*)$ is an invertible linear application, whose inverse is denoted $M_r \in \mathcal{L}(H_*, H)$, and we have
\[
    V' = M_r \big( (\rho' I - M' ) V \big).
\]
Symmetrically, $M^* - \rho I \in \mathcal{L}(H)$ is invertible (since $V_* \not\in H$), its inverse is denoted $M_a^* \in \mathcal{L}(H)$ and
\[
    V'_* = M_a^* \big( (\rho' I - M'_*) V_* \big) - \langle V_*, V' \rangle V_*.
\]
Using the notation $M_i = DF^i (0)$ ($i \in \{1, 2\}$), from the definition $M(\theta) = e^{T (1 - \theta) M_2} e^{T \theta M_1}$ we also have:
 \begin{align}
 M' &=T(MM_1-M_2M), \label{eq:Mprime}
 \\
 (M^*)'&=T\big((M_1)^*M^*-M^*(M_2)^*\big). \label{eq:Mstarprime}
 \end{align}
In order to compute the two terms in $r'$,
we note two preliminary identities. First, using \eqref{eq:Mprime} and \eqref{eq:rhoprime} we get
\begin{equation}
\frac{1}{T} (\rho' I - M') V  = \rho (\Pi^* - I) S V - (M - \rho I) M_1 V,
\label{eq:rhoprimeMprime}
\end{equation}
where both terms in the right-hand side belong to $H_*$.
Symmetrically, using \eqref{eq:Mstarprime} and \eqref{eq:rhoprime} we get
\begin{equation}
\frac{1}{T} (\rho' I - (M^*)') V_* = (M^* - \rho I) M_2^* V_* + \rho (\Pi - I) S^* V_*,
\label{eq:rhoprimeMstarprime}
\end{equation}
where both terms in the right-hand side belong to $H$.

Then, using \eqref{eq:rhoprimeMprime}, $M_r \in \mathcal{L} (H_*, H)$ and $M_r \circ (M - \rho I) = I_{H}$ we can compute
\begin{align*}
R_1 & = \big\langle M_r \big( (\rho' I - M' ) V \big), S^*V_*  \big\rangle, \\
        &= T \rho \big\langle M_r (\Pi^* - I) S V, S^* V_* \big\rangle - T \langle M_1 V, S^* V_* \rangle.
\end{align*}

Symmetrically, using \eqref{eq:rhoprimeMstarprime}, $M_a^* \in \mathcal{L} (H)$ and $M_a^* \circ (M^* - \rho I) = I_H$ we obtain
\begin{align*}
 R_2  & = \langle S V,  M_a^* \big( (\rho' I - M'_*) V_* \big) - \langle V_*, V' \rangle V_*   \rangle,
        \\
        &= T \rho \big\langle S V, M_a^* (\Pi - I) S^* V_* \big\rangle + T \langle S V, M_2^* V_* \rangle - \langle S V, V_* \rangle \langle V_* , V' \rangle.
\end{align*}

Using \eqref{eq:rhoprimeMprime} with $M_r \in \mathcal{L} (H_*, H)$ and $(M-\rho I) \circ M_r = I_{H}$ we also get
\begin{align*}
 \langle V_*, V' \rangle & =  \langle V_*,  M_r \big( (\rho' I - M' ) V \big) \rangle,
 \\
 &= T \rho \big\langle V_*, M_r (\Pi^* - I) S V \big\rangle - T \langle V_* , M_1 V \rangle.
\end{align*}
Gathering $R_1$ and $R_2$ we obtain
\begin{multline*}
    \frac{r'}{T} = \overbrace{\big( \langle S V, V_* \rangle \big)^2 + \big\langle (M_2 S - S M_1) V, V_* \big\rangle}^{r_1} + \\ \underbrace{\rho \big\langle M_r (\Pi^* - I) S V, (S^* -\langle S V, V_* \rangle I) V_* \big\rangle}_{r_2} + \underbrace{\rho \big\langle M_a^*(\Pi-I) S^* V_*, S V \big\rangle}_{r_3}.
\end{multline*}

We notice that
\[
r_2 = \rho \big\langle M_r (\Pi^* - I) S V, (I - \Pi) S^* V_* \big\rangle = \rho \big\langle SV, (I-\Pi ) M_r^* (\Pi- I) S^* V_* \big\rangle
\]
and
\[
r_3 =   \rho \big\langle SV, M_a^* (\Pi- I) S^* V_* \big\rangle,
\]
so $r_2 = r_3$, since $(M^*-\rho I) \circ M_a^*=I_{H}$,  $(M^*-\rho I) \circ M_r^*=I_{H}$ and $ (M^*-\rho I) \circ  \Pi  M_r^*=0$

Finally $\rho'' =T^2\rho r^2+ T\rho r'$ whence
\begin{equation}
    \frac{\rho''}{T^2 \rho} = 2 \big( \langle S V, V_* \rangle \big)^2 + \big\langle (M_2 S - S M_1) V, V_* \big\rangle + 2 \rho \big\langle M_a^*(\Pi-I) S^* V_*, S V \big\rangle.
    \label{eq:rhosecond}
\end{equation}

In principle, the identity \eqref{eq:rhosecond} could be used to derive \eqref{def:threshold} under more general conditions on $M_1 = DF^1(0), M_2 = DF^2 (0)$ than those given in Theorem \ref{thm:anydimension}. However, we do not explore such conditions in the present article.

\paragraph{Time scaling.} Until now we have considered that the period $T > 0$ was fixed. Letting $T$ go to $0$ or $+\infty$ yields interesting limits. For an irreducible Metzler matrix $U$,
\[
    e^{-T \mu(U)} e^{T U} \xrightarrow[T \to +\infty]{} V V_*^*
\]
where $V$ is the principal eigenvector of $U$ and $V_*$ is the principal eigenvector of $U^*$, normalized by $V_*^* V = 1$. From this fact, we have
\[
    e^{-T (\theta\mu(DF^1(0))+(1-\theta) \mu(DF^2(0))}M(\theta) \xrightarrow[T \to +\infty]{} V(0)V_*(0)^* V(1) V_*(1)^*,
\]
from which we deduce that
\[
    \frac{1}{T}\log(\rho(\theta)) \sim_{T \to +\infty} \theta\mu(DF^1(0))+(1-\theta) \mu(DF^2(0)).
\]
In fact, we even get the next term in the asymptotic development:
\[
    \log(\rho(\theta)) - T \big( \theta\mu(DF^1(0))+(1-\theta) \mu(DF^2(0)) \big) - \log \big(V_*(0)^* V(1) V_*(1)^* V(0) \big) = o_{T \to \infty}(1).
\]
Therefore, for $T$ large enough, $\rho$ is close to be monotone, and even close to be equal to the exponential interpolation of $T \mu(DF^1(0))$ and $T \mu(DF^2(0))$.

Meanwhile, $\lim_{T \to 0} \rho(\theta) \equiv 1$.

\paragraph{Optimization problems.} For a general two-seasonal model defined by a monotone and concave map $G : \mathcal{P} \times \R^N \to \R^N$ and $\pi^U, \pi^F \in \mathcal{P}$, a natural question is the optimization of the spectral radius when the favorable and unfavorable seasons can be split throughout the year. Let $M_{\sharp} := T \cdot DG(\pi_{\sharp}, 0)$ (with $\sharp \in \{U, F\}$). For $K \in \Z_+$, we define:
\begin{align}
\overline{\rho}_{M_U,M_F} (\theta,K) &= \max_{(\sigma, \sigma') \in \varphi_K(\theta)} \rho(M_{M_U, M_F}(\sigma,\sigma')),
\label{def:maxirho}
\\[10pt]
\underline{\rho}_{M_U,M_F} (\theta,K) &= \min_{(\sigma, \sigma') \in \varphi_K(\theta)} \rho(M_{M_U, M_F}(\sigma,\sigma')),
\label{def:minirho}
\end{align}
where
\[
    \varphi_K (\theta) := \big\{ \big((\theta_k)_k, (\theta'_k)_k \big) \in [0, 1]^{2K}, \, \sum_{k=1}^K \theta_k = \theta, \, \sum_{k=1}^K \theta'_k = 1 - \theta \big\}
\]
is compact and for $(\sigma,\sigma')\in \varphi_K(\theta)$ and $M_1, M_2 \in \mathcal{M}_N (\R)$,
\[
    M_{M_1, M_2} (\sigma, \sigma') := e^{\theta'_K M_2} e^{\theta_K M_1} \cdots e^{\theta'_1 M_2} e^{\theta_1 M_1}.
\]
Note that by Gelfand's formula, 
\[
    \rho(M(\sigma,\sigma')) \leq \prod_{k} \rho(e^{\theta'_k M_2}) \rho(e^{\theta_k M_1}) = e^{\theta \mu_1 + (1 - \theta) \mu_2},
\]
where $\mu_i = \mu(M_i)$.

\begin{remark} In the specific case when $M_U$ and $M_F$ are irreducible Metzler matrices with the same principal eigenvector (that is, condition $(A)$) , $\rho(M(\sigma,\sigma'))$ does not depend on $(\sigma, \sigma') \in S_K (\theta)$ and does even not depend on $K \in \Z_+$: we have
\[
    \forall K \in \Z_+, \forall \theta \in [0, 1], \quad \overline{\rho}_{M_U,M_F} (\theta,K) = e^{\big( \theta \mu_U + (1 - \theta) \mu_F \big)} = \underline{\rho}_{M_U,M_F} (\theta,K),
\]
with $\mu_{\sharp} = \mu(M_{\sharp})$.

In this case, assuming $\mu_F > 0 > \mu_U$ we recover Theorem \ref{thm:main} with 
\[
    \theta_* = \frac{\mu_F}{\mu_F - \mu_U}.
\]
\end{remark}

\paragraph{Acknowledgements.} The authors wish to thank Dongmei Xiao and Jean-Pierre Fran\c{c}oise for useful discussions, and Benoit Perthame for valuable comments which helped to improve this manuscript. Part of this work was done while HJ was visiting Dongmei Xiao at SJTU, he thanks sincerely all the members of ODE\&DS group.

\appendix

\section{Proof of Theorem \ref{thm:extension}}
\label{app:proof}

We consider the following $T$-periodic piecewise-autonomous differential equation
\begin{equation}\label{AP1}
\frac{dx}{dt}=F(t,x),
\end{equation}
where for all $x \in \R^N$, $F(\cdot, x)$ is a piecewise-constant function. We assume that there is a family of functions $(F^k)_k : \R_+^N \to \R_+^N$ such that:
 \begin{align*}
F(t,x) &= F^{k} (x) \text{ if } \frac{t}{T} - \big\lfloor \frac{t}{T} \big\rfloor \in [\theta_{k-1}, \theta_k)
%\begin{cases}
%F^1(x),  \quad   nT \leq t < (n+ \theta_1)T \\
%\quad \vdots \\
%F^k(x),  \quad    (n+\theta_{k-1})T \leq  t <  (n+\theta_k)T\\
%\quad \vdots \\
%F^K(x),  \quad    (n+\theta_{N-1})T \leq  t < (n+1)T \\
% \end{cases}
 %\\
 %&=  \sum\limits_{k=1}^{K} \mathds{1}_{[(n+\theta_{k-1})T,   (n+\theta_k)T)}(t)F^k(x),
 \end{align*}
where $(\theta_i)_{0 \leq i \leq N} \in [0, 1]^{N+1}$ is a non-decreasing family such that $\theta_0 = 0$ and $\theta_N = 1$. For $x \in \R$, the notation $\lfloor x \rfloor$ stands for the largest integer $n \in \Z$ such that $n \leq x$.
 
We assume that for all $1 \leq k \leq K$,  $F^k$ : $\R_+^N \to  \R_+^N$ is continuously differentiable, monotone (that is, if $x \ll y$ then $F^k (x) \ll F^k(y)$), concave (that is, if $x \ll y$ then $DF^k(x) \gg DF^k (y)$) and satisfies $F^k (0) = 0$.

Following the lines of \cite{HLS1} and \cite{JJF}, to prove Theorem  \ref{thm:extension} we split into four assertions the various hypotheses of \cite[Theorem 2.1]{HLS1}, to check that they hold for the Poincare map for \eqref{AP1}. We begin with:

\begin{lemma}   \label{A1}
If $x(t)$ is a solution of \eqref{AP1} with $x(t_0) \geq 0$, then $x(t)$ can be extended to  $[t_0, +\infty]$ and $x(t) \geq 0 $ for $t \geq t_0$.
\end{lemma}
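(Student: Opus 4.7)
The plan is to treat one interval of constancy of $F(\cdot, x)$ at a time. On any such interval $I_{n,k} := [(n + \theta_{k-1}) T, (n + \theta_k) T)$, the equation reduces to the autonomous system $\dot x = F^k(x)$ with $F^k \in \mathcal{C}^1$, so Cauchy--Lipschitz provides local existence and uniqueness. I will first show that the solution stays in $\R_+^N$, then that it does not blow up in finite time, and finally concatenate across the finitely many interfaces in every period.

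For positivity on $I_{n,k}$, I invoke the Kamke--M\"uller subtangent condition. Since $F^k$ satisfies \eqref{monotone} and $F^k(0) = 0$, for any $x \geq 0$ with $x_i = 0$, the function $y \mapsto F^k_i(y)$ is non-decreasing in each coordinate $y_j$ with $j \neq i$; thus $F^k_i(x) \geq F^k_i(0) = 0$. This standard criterion implies that $\R_+^N$ is positively invariant under the flow of $\dot x = F^k(x)$.

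For non-explosion, I exploit concavity. Writing $M_k := DF^k(0)$ (a Metzler matrix by \eqref{monotone}), assumption \eqref{concave} combined with $F^k(0) = 0$ gives the global upper bound
\[
\forall x \geq 0, \quad F^k(x) = \int_0^1 DF^k(sx) x \, ds \leq M_k x.
\]
Let $y(t) := e^{(t - t_0) M_k} x(t_0)$; this is defined for all $t$ and stays non-negative because $M_k$ is Metzler and $x(t_0) \geq 0$. Setting $z := y - x$ we have $z(t_0) = 0$ and $\dot z \geq M_k z$ on the maximal interval of existence, so the cooperative comparison principle (again relying on the Metzler structure of $M_k$) yields $z(t) \geq 0$, hence $0 \leq x(t) \leq y(t)$. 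Since $y$ is bounded on every bounded interval, so is $x$; in particular $x$ has no finite-time blow-up and extends up to the right endpoint of $I_{n,k}$.

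Finally, the value $x((n + \theta_k) T)$ obtained by continuity serves as initial condition for the next interval $I_{n, k+1}$ (or $I_{n+1, 1}$ if $k = K$), on which the same argument applies verbatim. Iterating over the finite family $(I_{n,k})_{1 \leq k \leq K}$ within one period, and then over $n \in \Z_+$, produces a non-negative global solution on $[t_0, +\infty)$. The only delicate step is the non-explosion argument, which crucially couples concavity (to dominate $F^k$ by a linear field at the origin) with the Metzler nature of $DF^k(0)$ (to turn this scalar-type bound into a genuine componentwise comparison); everything else is routine.
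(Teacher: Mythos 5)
Your proof is correct and follows essentially the same strategy as the paper: positivity via the quasi-positivity (subtangent) condition coming from \eqref{monotone} and $F^k(0)=0$, and global existence by using concavity to dominate $F^k$ by its linearization at $0$ and comparing with the resulting linear flow. The only difference is presentational — you argue interval by interval and make the comparison explicit through $z=y-x$ and the nonnegativity of $e^{tM_k}$, whereas the paper invokes Kamke's theorem once on the full piecewise-autonomous equation — but the mathematical content is identical.
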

\begin{proof}
Let $t \geq 0$. For all $y \geq 0$, by concavity of all $F^k$ ($1 \leq k \leq K$), we have $D_x F (t, y) \leq D_x F(t, 0)$. Hence for all $t \geq 0$ and $x \geq 0$,
\begin{align*}
F(t,x)&= F(t, 0) + \big( \int_0^1 D_x F(t, sx) ds \big) x
\\
& \leq F(t,0)+ D_xF(t,0) x \text{ since } x \geq 0.
\end{align*}

Let $y$ be the solution to the affine differential equation $y'=  F(t,0)+ D_x F(t,0)y$, $y(t_0) = x(t_0)$. From Kamke's theorem, we deduce that $x(t) \leq y(t)$ on the maximal interval of existence $[t_0,w)$ of $x(t)$. Since $y(t)$ is defined for all $t \geq t_0$, it follows that $w =+\infty$. 

The standard positivity property \eqref{positive} implies $x(t) \geq 0$ for $t\geq t_0$.
\end{proof}

Then, as an immediate consequence of monotonicity and Kamke's theorem:
\begin{lemma}  \label{A2}
If $x(t)$ and $y(t)$ are solutions of \eqref{AP1} with $0 \leq y(t_0) \ll x(t_0)$, then $y(t) \ll x(t)$ for $t > t_0$.
\end{lemma}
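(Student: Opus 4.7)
The plan is to reduce the strict-order assertion to a linear comparison by linearizing along the segment joining $y(t)$ and $x(t)$. Setting $w(t) := x(t) - y(t)$, which is defined and continuous on $[t_0, +\infty)$ by Lemma \ref{A1}, I would use the $\mathcal{C}^1$ regularity of each $F^k$ in $x$ to write
\[
    w'(t) = F(t, x(t)) - F(t, y(t)) = A(t)\, w(t), \qquad A(t) := \int_0^1 D_x F\bigl(t,\, s x(t) + (1-s) y(t)\bigr)\, ds,
\]
with initial data $w(t_0) \gg 0$. Hypothesis \eqref{monotone} makes $A(t)$ Metzler for every $t$ (the set of Metzler matrices is a convex cone), and $A$ is piecewise continuous in $t$ with the same breakpoints as $F$.

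Next, I would invoke the standard fact that the transition matrix $\Phi(t, t_0)$ of a (piecewise continuous) linear Metzler system has non-negative entries and strictly positive diagonal. Non-negativity follows because the flow cannot exit the non-negative orthant through any face $\{z_i = 0\}$, on which $z_i' = \sum_{k \neq i} A_{ik}(t) z_k \geq 0$. For the diagonal, from $\partial_t \Phi_{jj}(t, t_0) = A_{jj}(t) \Phi_{jj}(t, t_0) + \sum_{l \neq j} A_{jl}(t) \Phi_{lj}(t, t_0)$, dropping the non-negative sum gives $\partial_t \Phi_{jj} \geq A_{jj}(t) \Phi_{jj}$, and Gronwall yields
\[
    \Phi_{jj}(t, t_0) \geq \exp\!\Bigl( \int_{t_0}^t A_{jj}(s)\, ds \Bigr) > 0.
\]

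The conclusion then follows immediately from $w(t) = \Phi(t, t_0) w(t_0)$: for each component $i$,
\[
    w_i(t) = \sum_j \Phi_{ij}(t, t_0)\, w_j(t_0) \geq \Phi_{ii}(t, t_0)\, w_i(t_0) > 0,
\]
giving $y(t) \ll x(t)$ for all $t \geq t_0$.

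The main technical delicacy is the piecewise continuity of $A$, which precludes a direct application of the usual continuous-in-time Kamke comparison theorem. I would handle this via the cocycle identity $\Phi(t, t_0) = \Phi(t, s)\Phi(s, t_0)$: the class of matrices with non-negative entries and strictly positive diagonal is closed under multiplication (since $(BC)_{ii} \geq B_{ii} C_{ii} > 0$ and $(BC)_{ij} \geq 0$), so the sign pattern propagates across each breakpoint $nT + \theta_k T$ without difficulty. Notably, no irreducibility assumption is needed here; the strict positivity of the diagonal of $\Phi$ alone suffices to preserve the interior-cone condition $w(t_0) \gg 0$ forward in time.
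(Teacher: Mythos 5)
Your proof is correct, but it takes a more explicit route than the paper, which disposes of this lemma in one line as ``an immediate consequence of monotonicity and Kamke's theorem.'' Where the paper leans on the classical quasimonotone comparison theorem (applied implicitly on each subinterval of continuity and concatenated across breakpoints), you build the argument from scratch: the mean-value linearization $w' = A(t)w$ with $A(t)$ Metzler by \eqref{monotone} (note this uses that the segment joining $x(t)$ and $y(t)$ stays in $\R_+^N$, which Lemma \ref{A1} guarantees), then nonnegativity of the transition matrix plus the Gronwall bound $\Phi_{jj}(t,t_0) \geq \exp\bigl(\int_{t_0}^t A_{jj}\bigr) > 0$ on the diagonal, and finally the componentwise estimate $w_i(t) \geq \Phi_{ii}(t,t_0) w_i(t_0) > 0$. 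What your version buys is threefold: it handles the piecewise continuity of $t \mapsto A(t)$ explicitly via the cocycle identity rather than by silent concatenation; it makes clear that preserving the strict order $\ll$ from a $\gg 0$ initial gap requires only positive diagonal entries of $\Phi$, not irreducibility (which would be needed to upgrade $>$ to $\gg$, as in the paper's Lemma \ref{A3}); and it is self-contained, closer in spirit to the paper's own proofs of Lemmas \ref{A3} and \ref{A4} than to its proof of this particular statement. The paper's invocation of Kamke is shorter but hides exactly the points you spell out.
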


For all $s \in \R$ and $x_0 \in \R^N$, we denote by $t \mapsto \phi(t; s, x_0)$ the solution of \eqref{AP1} which satisfies $x(s) = x_0$. In particular, $\phi(s;s, x) = x$.
For all $1 \leq k \leq K$, we also introduce $t \mapsto \phi^k(t; s,x_0)$ as the solution to 
\[
\frac{dx}{dt} = F^k(x), \quad x(s) = x_0.
\]
By regularity of $F^k$, each $\phi^k(\theta_k T, \theta_{k-1} T, \cdot)$ is a $C^1$ function.

With these notations it follows from Lemmas~\ref{A1} and \ref{A2} that the Poincare map
\begin{equation}
P(x) := \phi(T; 0, x) = \phi^K \big( \theta_K T;  \theta_{K-1}T,\phi^{K-1} \big( \cdots \phi^1(\theta_1 T; 0 , x) \big) \big), \quad  x\geq 0
\end{equation}
is well defined as a  $C^1$ map $P : \R_+^N \to  \R_+^N$ because  it is a composition of functions of class $C^1$. In order to apply \cite[Theorem 2.1]{HLS1}, we must verify that the differential $DP$ satisfies:
\begin{align}
\tag{$M_0$} &DP(0) \gg 0 \text{ and } DP (x) \geq 0 \text{ if } x \gg 0,
\label{DP:M0}
\\
\tag{$C_0$} & DP(y) < DP(x) \text{ if } 0 \ll x \ll y.
\label{DP:C0}
\end{align}
Introducing the notations, for $x \in \R^N$
\begin{align*}
&\widetilde{\phi}^k (x) := \phi^k \big( \theta_k T; \theta_{k-1} T, \widetilde{\phi}^{k-1} (x) \big) \in \R^N \text{ for }1 \leq k \leq K, \quad \widetilde{\phi}_0 (x) := x,
\\
& \widehat{\phi}^k (x) := \frac{\partial \phi^k}{\partial x} (\theta_k T; \theta_{k-1} T, x) \in \R^{N \times N},
\end{align*}
we can compute
\begin{equation}
DP(x)= \frac{\partial \phi}{\partial x }(T; 0, x)= \prod_{k= 1}^K \widehat{\phi}^k \circ \widetilde{\phi}^{k-1} (x).
\end{equation}

We write $\Phi(t, x) := \frac{\partial \phi}{\partial x }(t; 0, x)$, so that $DP = \Phi(T,\cdot)$. By construction, $\Phi(t, x)$ is the fundamental matrix for the variational equation
 \begin{equation} \label{C3}
X'=D_x F(t, \phi(t; 0, x) )X, \quad X(0)=I
\end{equation}
where $I$ is the $N \times N$ identity matrix. Lemma \ref{A3} below is a direct consequence of \eqref{monotone}
\begin{lemma}  \label{A3}
If $x \gg 0$, then  $\Phi(t,x) > 0$ for $t>0$. In addition, $\Phi(t,0) \gg 0$ for $t > 0$.
\end{lemma}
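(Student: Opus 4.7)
\textbf{Plan for the proof of Lemma \ref{A3}.}

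My plan is to decompose $\Phi(t,x)$ according to the piecewise structure and then exploit two classical facts: (a) the fundamental matrix of a linear ODE with Metzler coefficients is entrywise nonnegative, and (b) the exponential $e^{tA}$ of an irreducible Metzler matrix $A$ is entrywise strictly positive for $t>0$.

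First, I would note that on each subinterval $[\theta_{k-1}T,\theta_k T)$, the solution $\phi(\cdot;0,x)$ satisfies the autonomous equation $y'=F^k(y)$, so $\Phi(t,x)$ factors as a product $\widehat{\phi}^k\bigl(\widetilde{\phi}^{k-1}(x)\bigr)\cdots\widehat{\phi}^1(x)$ (up to a partial last factor if $t$ lies strictly inside an interval). It therefore suffices to prove the claim for each autonomous factor, i.e.\ for the fundamental matrix $\Psi(s,z)$ of $Y'=DF^k(\psi(s,z))Y$, $Y(0)=I$, where $\psi(\cdot,z)$ is the flow of $F^k$.

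Second, to obtain $\Psi(s,z)\geq 0$ when $z\geq 0$, I would use the shift trick: by \eqref{monotone}, $A(s):=DF^k(\psi(s,z))$ is Metzler, hence writing $A(s)=-\alpha(s)I+B(s)$ with $\alpha(s)$ large enough, $B(s)\geq 0$, the Picard iteration for $Y'=B(s)Y$, $Y(0)=I$ preserves entrywise nonnegativity, so $Y\geq 0$, whence $\Psi(s,z)=e^{-\int_0^s\alpha}\,Y\geq 0$. Invertibility (Liouville's formula gives $\det\Psi=e^{\int \mathrm{tr}\,A}>0$) forces $\Psi(s,z)\neq 0$, hence $\Psi(s,z)>0$. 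Products of nonnegative invertible matrices are again nonnegative invertible, so $\Phi(t,x)>0$ for every $x\gg 0$ and $t>0$, proving the first assertion.

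Third, for $x=0$ I would use that $F^k(0)=0$ implies $\psi^k(s,0)=0$, so the coefficient of the variational equation collapses to the constant irreducible Metzler matrix $DF^k(0)$. Thus each autonomous factor is a plain exponential $e^{(\theta_k-\theta_{k-1})T\,DF^k(0)}$. The main (and really the only) nontrivial input is the classical Perron--Frobenius fact that for an irreducible Metzler matrix $A$, $e^{sA}\gg 0$ for every $s>0$ (obtained, e.g., by shifting $A$ so it becomes a nonnegative irreducible matrix and using $e^{sA}=\sum_n s^nA^n/n!$ together with the fact that some power of an irreducible nonnegative matrix has all entries strictly positive). A standard verification then shows that the product of strictly positive matrices is strictly positive, so $\Phi(t,0)\gg 0$ for every $t>0$.

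The main technical point will be handling the case where $t$ lies inside an interval $[\theta_{k-1}T,\theta_k T)$, where the last factor is $e^{(t-\theta_{k-1}T)DF^k(0)}$ with a possibly very small exponent; this is still strictly positive as soon as $t>\theta_{k-1}T$, so the only case requiring a separate (trivial) argument is $t=\theta_{k-1}T$, reduced to the previous interval. I expect no genuine difficulty, since the whole argument reduces to the two well-known matrix facts above.
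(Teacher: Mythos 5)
Your proposal is correct and follows essentially the same route as the paper: a shift of the Metzler coefficient matrix by a multiple of the identity to get entrywise nonnegativity of the fundamental matrix (the paper does this via the differential inequality $\frac{d}{dt}\Phi \geq -M\Phi$, you via Picard iteration on the shifted system, plus Liouville for invertibility — a minor variation), and then irreducibility of $D_xF(t,0)$ to upgrade to strict positivity along the zero solution, which by piecewise constancy reduces to the standard fact that $e^{sA}\gg 0$ for $s>0$ when $A$ is an irreducible Metzler matrix.
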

\begin{proof}
    Let $T > 0$ and $x \in \R^N$. Let $M = M_{T,x} \in (0, +\infty)$ such that $D_x F (t, \phi(t;0,x)) + M I \geq 0$ for all $t \in [0, T]$. As long as $\Phi(t,x) \geq 0$ on $[0, T]$ we have on this interval $ \frac{d}{dt} \Phi(t,x) \geq -M \Phi(t,x)$, hence $\Phi(t,x) \geq e^{-M t} I > 0$.
    
    Then, $\Phi(t,0)$ solves \eqref{dyn:linearized} with $\Phi(0, 0) = I$. Since $D_x F(t,0)$ is an irreducible (by \eqref{irre}) Metzler matrix, $\Phi(t,0) \gg 0$ for $t > 0$.
\end{proof}
Applying Lemma \ref{A3} with $t = T$ yields \eqref{DP:M0}.
It remains only to verify \eqref{DP:C0}, which is the object of the next lemma
 \begin{lemma}  \label{A4}
If $0 \ll x \ll y$, then $DP(x)>DP(y)$.
\end{lemma}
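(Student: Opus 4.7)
The plan is to propagate the strict inequality through the variational equation. Write $A(t,z) := D_x F(t, \phi(t; 0, z))$ so that, by definition, $\Phi(\cdot, z)$ is the fundamental solution of the linear non-autonomous system $X'=A(t,z)X$, $X(0)=I$. Set $Z(t) := \Phi(t,x) - \Phi(t,y)$; then $Z(0)=0$ and
\[
Z'(t) = A(t,x)\, Z(t) + \bigl(A(t,x) - A(t,y)\bigr)\,\Phi(t,y).
\]
By variation of constants, if $R(t,s)$ denotes the resolvent of $X'=A(t,x)X$ based at time $s$, then
\[
Z(T) = \int_0^T R(T,s)\,\bigl(A(s,x)-A(s,y)\bigr)\,\Phi(s,y)\,ds.
\]

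First I would establish non-negativity, $Z(T)\geq 0$, by showing that every factor of the integrand is non-negative. Since $D_xF$ is Metzler by \eqref{monotone}, $A(\cdot,x)$ is Metzler at each fixed $t$ (on each autonomous piece it is even constant), so the resolvent $R(T,s)\geq 0$ for $0\leq s\leq T$ by the standard comparison principle for Metzler ODEs. By Lemma \ref{A2}, since $0\ll x\ll y$, the trajectories satisfy $\phi(s;0,x)\ll \phi(s;0,y)$ for all $s>0$; then concavity \eqref{concave} gives $A(s,x)-A(s,y)\geq 0$. Finally $\Phi(s,y)\geq 0$ by Lemma \ref{A3}. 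Hence $Z(T)\geq 0$.

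The main obstacle is the strict inequality, and here the combination of concavity and irreducibility does the work. Using that $\phi(s;0,x)\ll \phi(s;0,y)$ strictly on each open sub-interval $\bigl((\theta_{k-1}+n)T,(\theta_k+n)T\bigr)$ and arguing from the concavity inequality \eqref{concave} evaluated between two truly distinct interior points, one obtains at least one pair $(i,j)$ and a positive-measure set of times $s\in[0,T]$ for which $[A(s,x)-A(s,y)]_{ij}>0$ (this is where a strict version of the concavity of $F^k$ is needed, and for the model of interest is supplied by the explicit computation in the proof of Proposition~\ref{prop:sysgen}, where the $(1,1)$-entry of $D_X F$ is strictly decreasing in $X_1$). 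On such a set, $\bigl(A(s,x)-A(s,y)\bigr)\Phi(s,y)$ has a strictly positive entry: indeed $\Phi(s,y)\gg 0$ after any positive elapsed time by combining Lemma \ref{A3} with the irreducibility hypothesis \eqref{irre} and continuity in $y$ near $0$, so some column of $\Phi(s,y)$ is strictly positive in the $i$-th row. Finally the resolvent $R(T,s)$ acts on the left: since $A(t,x)$ is an irreducible Metzler matrix (again via \eqref{irre} and the closeness of $\phi(t;0,x)$ to zero, or by approximating with the irreducible $A(t,0)$), $R(T,s)$ has only strictly positive columns for $T-s>0$, and thus carries a non-zero non-negative vector into the interior of $\R_+^N$.

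Gathering these facts, the integrand is a non-negative matrix that is not identically zero, and each $R(T,s)$ maps non-zero non-negative vectors to non-zero non-negative vectors, so $Z(T)=\Phi(T,x)-\Phi(T,y)$ is a non-zero non-negative matrix. This is precisely $DP(x)-DP(y)>0$, which is \eqref{DP:C0} and completes the proof. The delicate point, and the one I would treat with most care, is turning the weak concavity statement \eqref{concave} into a strict inequality in at least one matrix entry on a set of positive measure, which is the analogue of the "strict sub-tangentiality" used in \cite{HLS1,JJF}.
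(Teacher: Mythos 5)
Your argument is essentially the paper's own proof: it also sets $Y(t)=\Phi(t,x)-\Phi(t,y)$, applies variation of constants with the resolvent $\Phi(T,x)\Phi(s,x)^{-1}$, uses Lemma \ref{A2} and \eqref{concave} for the sign of the Jacobian difference, and the Metzler property for positivity of the resolvent (this is the lemma of \cite{AM}). Two remarks. First, your worry about strictness is well placed: \eqref{concave} as stated only gives $Y(T)\geq 0$, and the paper resolves this by silently strengthening the hypothesis in the appendix to $DF^k(x)\gg DF^k(y)$ for $x\ll y$; for the motivating system \eqref{S0} the Jacobian difference has a single nonzero entry, so your ``one positive entry on a set of positive measure'' is the right level of generality. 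Second, your claims that $\Phi(s,y)\gg 0$ and that $R(T,s)$ has strictly positive columns do not follow from \eqref{irre}, which assumes irreducibility only at $x=0$, not along the trajectory through $y\gg 0$; continuity near $0$ cannot repair this at a fixed $y$. The overclaim is harmless, though: the bound $\Phi(t,z)\geq e^{-Mt}I$ from the proof of Lemma \ref{A3} gives positive diagonals for both $R(T,s)$ and $\Phi(s,y)$, so a positive $(i,j)$ entry of $A(s,x)-A(s,y)$ on a set of positive measure already forces $[Y(T)]_{ij}>0$, hence $Y(T)>0$ in the sense $Y(T)\geq 0$, $Y(T)\neq 0$, which is all that $DP(x)>DP(y)$ requires.
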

\begin{proof}
We write $Z(t, x) = D_x F(t, \phi(t; 0, x) )$ for short. If $0 \ll x \ll y$, from Lemma \ref{A2}, we have $\phi(t; 0, x) \ll \phi(t; 0, y)$ for all $t \geq 0$.  By \eqref{concave}, we deduce that $Z(t, x) > Z(t,y)$. Hence
\begin{align*}
\Phi '(t,x)&= Z(t, x) \Phi (t,x)\\
&\geq Z(t, y) \Phi (t,x),
\end{align*}
since $\Phi(t,x) \geq 0$ by Lemma \ref{A3}.
Therefore, it follows from Kamke's theorem that $\Phi (t,x) \geq \Phi (t,y)$. 

Then, we follow (\cite{AM}, lemma l) by letting $Y(t) = \Phi (t,x)- \Phi (t,y)$. $Y(t)$ satisfies
\begin{equation*}
Y'(t)=Z(t,x)Y(t)+[Z(t, x) -Z(t, y) ]\Phi (t,y), \quad Y(0)=0.
\end{equation*}
Using the fundamental matrix $\Phi$ we get
\begin{equation*}
Y(T)=\int_{0}^{T}   \Phi (T,x) {\Phi (s,x) }^{-1} [Z(s, x) -Z(s, y)]  \Phi (s,y)ds
\end{equation*}
Now, $ Z(t,s) \equiv  \Phi (t,x) {\Phi (s,x) }^{-1} > 0$ for $t>s$ since it is the fundamental matrix at $t= s$ of $z'= Z(t, x)z$ (exactly as in Lemma \ref{A3}). 
Since $\Phi(s, y) > 0$ for $0 < s \leq T$ and $Z(s, x) - Z(s, y) \gg 0$  for $0  \leq s \leq T$,  it follows that $Y(T) > 0$. This is the desired conclusion.
\end{proof}

We have verified all assumptions and can apply \cite[Theorem 2.1]{HLS1} and Theorem \ref{thm:extension} follows immediately  on noting that $\lambda = \rho(DP(0)) = \rho(\Phi(T, 0))$ is the characteristic multiplier of~\eqref{dyn:linearized} of maximum modulus.


\begin{thebibliography}{99}


%\bibitem{AK}{\sc Aronsson, G. and   Kellog, R. B.}
% {\it  On a differential equation arising from compartmental analysis.}
 % Math. Biosci. 38, 113-122 (1978).
  
  
  
 \bibitem{AM}{\sc Aronsson, G. and Mellander, I.}
   {\it   A deterministic model in biomathematics. Asymptotic behavior and threshold conditions}
   Math. Biosci. 49. 207-222 (1980).

\bibitem{BAD}
{\scshape Baca\"{e}r, N. and Ait Dads, N.}
{\itshape Sur l'interpr\'{e}tation biologique d'une d\'{e}finition du paramètre R0 pour les mod\`{e}les p\'{e}riodiques de populations} (french)
J. Math. Biol. 65 : 601-621 (2012).

\bibitem{B}
{\scshape Baca\"{e}r, N.}
{\itshape Sur le mod\`{e}le stochastique SIS pour une épid\'{e}mie dans un environnement p\'{e}riodique} (french)
J. Math. Biol. 71 : 491-511 (2015).

\bibitem{CCF}
{\scshape Campillo, F., Champagnat, N. and Fritsch, C.}
{\itshape On the variations of the principal eigenvalue with respect to a parameter in growth-fragmentation models}
Communications in Mathematical Sciences Volume 15 Number 7: 1801--1819 (2017).


\bibitem{CGP}
{\sc Clairambault, J., Gaubert, S., Perthame, B. }
 {\it An inequality for the Perron and Floquet eigenvalues of monotone differential systems and age structured equations}
 Comptes Rendus Mathematique, Volume 345, Number 10: 549--554 (2007).


%\bibitem{CP}
%{\sc Clairambault, J. , Perthame, B. and Maran, A.-Q.}
% {\it Analysis of a system describing proliferative-quiescent cell dynamics}
% Chinese Annals of Mathematics-Series B (2018).

\bibitem{GL}
{\scshape Gaubert, S. and Lepoutre, T.}
{\itshape Discrete limit and monotonicity properties of the Floquet eigenvalue in an age structured cell division cycle model}
Journal of Mathematical Biology, Volume 71, Number 6: 1663--1703 (2015).



 \bibitem{HW}{\sc Hirsch, M. W.}
{\it  The Dynamical Systems approach to differential equations.}
Bull. Am. Math. Soc. 11. 1-64 (1984).

\bibitem{K}{\sc Kingman, J.F.C.}
{\it A convexity property of positive matrices.}
Quart. J. Math., 12:283-284 (1961). 

\bibitem{HLS1}{\sc Smith, H.L.}
{\it  Cooperative systems of differential equations  with concave nonlinearities}
Nonlinear Analysis Theory Methods and Application 10(10): 1037--1052 (1986).


%\bibitem{LC}{\sc Li, Jia and Cai, Liming and Li, Yang}
%{\it  Stage-structured wild and sterile mosquito population models and their dynamics}
% of biological dynamics. Volume 11, Number sup1:79--101 (2017).






\bibitem{JJF}{\sc Jiang, J.}
{\it  The algebraic criteria for the asymptotic behavior of cooperative systems with concave nonlinearities}
System Science and Mathematical Sciences 6(3):193-208 (1993).

\bibitem{MPS}
{\scshape Mirrahimi, S., Perthame, B. and Souganidis, P.}
{\itshape Time fluctuations in a population model of adaptive dynamics}
Annales de l'Institut Henri Poincaré (C) Non Linear Analysis Volume 32 Issue 1:41--58 (2015).

%\bibitem{TX}{\sc Tang, Y., and Xiao, D., Zhang, W. and Zhu, D.}
%{\it Dynamics of epidemic models with asymptomatic infection and seasonal succession.}
%Mathematical biosciences and engineering: MBE Volume 14, Number 5-6:1407-1424 (2017).

\bibitem{XD}
{\scshape Xiao, D.}
{\itshape  Dynamics and bifurcations on a class of population model with seasonal constant-yield harvesting}
Discrete and Continuous Dynamical Systems Series B. Volume 21, Number 2: 699-719 (2016).


%\bibitem{XM}{\sc Xue, Ling and Manore, Carrie A and Thongsripong, Panpim and Hyman, James M}
%{\it  Two-sex mosquito model for the persistence of Wolbachia}
%Journal of biological dynamics. Volume 11, Number sup1:216--237 (2017).



\bibitem{ZZT}{\scshape Zhang, Z., Ding, T., Huang, W. and Dong, Z.}
{\itshape Qualitative Theory of Differential Equations}
Translations of Mathematical Monographs 101, Amer. Math. Soc., Providence (1991).


\end{thebibliography}
\end{document}